\numberwithin{equation}{section}
\begin{document}
\baselineskip=22pt
\bibliographystyle{plain}

\newcommand{\ve}[1]{\mbox{\boldmath$#1$}}
\newcommand{\be}{\begin{equation}}
\newcommand{\ee}{\end{equation}}
\newcommand{\ben}{\begin{equation*}}
\newcommand{\een}{\end{equation*}}
\newcommand{\bc}{\begin{center}}
\newcommand{\ec}{\end{center}}
\newcommand{\bal}{\begin{align*}}
\newcommand{\enal}{\end{align*}}
\newcommand{\al}{\alpha}
\newcommand{\bt}{\beta}
\newcommand{\gm}{\gamma}
\newcommand{\de}{\delta}
\newcommand{\la}{\lambda}
\newcommand{\om}{\omega}
\newcommand{\Om}{\Omega}
\newcommand{\Gm}{\Gamma}
\newcommand{\De}{\Delta}
\newcommand{\Th}{\Theta}
\newcommand{\nno}{\nonumber}
\newtheorem{theorem}{Theorem}[section]
\newtheorem{lemma}{Lemma}[section]
\newtheorem{assum}{Assumption}[section]
\newtheorem{claim}{Claim}[section]
\newtheorem{proposition}{Proposition}[section]
\newtheorem{corollary}{Corollary}[section]
\newtheorem{definition}{Definition}[section]
\newtheorem{remark}{Remark}[section]
\newenvironment{proof}[1][Proof]{\begin{trivlist}
\item[\hskip \labelsep {\bfseries #1}]}{\end{trivlist}}
\newenvironment{proofclaim}[1][Proof of Claim]{\begin{trivlist}
\item[\hskip \labelsep {\bfseries #1}]}{\end{trivlist}}

\def \qed {\hfill \vrule height7pt width 5pt depth 0pt}
\def\refhg{\hangindent=10pt\hangafter=1}
\def\refmark{\par\vskip 2.50mm\noindent\refhg}

\title{\textbf{ Stability of Coalescence Hidden variable Fractal Interpolation Surfaces }}
\date{}
\author{G. P. Kapoor$^1$  and  Srijanani Anurag Prasad$^2$}
\maketitle \bc Department of Mathematics and Statistics\\
Indian Institute of Technology Kanpur\\  Kanpur 208016, India\\
$^1$ gp@iitk.ac.in  $^2$ jana@iitk.ac.in\ec

\onehalfspacing

\begin{abstract}
In the present paper, the stability of Coalescence Hidden variable Fractal Interpolation Surfaces(CHFIS) is established.
The estimates on error in approximation of the data generating function by CHFIS are found
when there is a perturbation in independent, dependent and hidden variables. It is proved that any small
perturbation in any of the variables of generalized interpolation data results in only small
perturbation of CHFIS.  Our results are likely to be useful in investigations of texture of surfaces
arising from the simulation of surfaces of rocks, sea surfaces, clouds
and similar natural objects wherein the generating function depends on more than one variable.
\end{abstract}

{\scriptsize \noindent {\bf Key Words and Phrases:} Affine, Coalescence, Fractal, Iterated Function System,
Fractal Interpolation Function, Fractal Interpolation Surfaces, Interpolation, Smoothness, Stability, Simulation, Rocks,
Sea Surface, Clouds}

\newpage

\section{Introduction}
The theory of fractal interpolation has become a powerful tool in applied science and engineering since Barnsley~\cite{barnsley86}
introduced Fractal Interpolation Function (FIF) using the theory of Iterated Function System (IFS). Massopust~\cite{massopust90}
extended this concept to Fractal Interpolation Surface (FIS) using IFS wherein he assumed the surface as triangular simplex and
interpolation points on the boundary to be co-planar. In view of lack of flexibility in his construction,
Geronimo and Hardin~\cite{geronimo93} generalized the construction of FIS by allowing more general boundary data. Subsequently,
Xie and Sun~\cite{xie97} used bivariate functions on rectangular grids with arbitrary contraction factors and without any condition
on boundary points to construct Bivariate FIS. Dalla \cite{dalla02} improvised this construction by using collinear boundary points
and proved that the attractor is continuous FIS. However, all the constructions mentioned above lead to self-similar attractors.

A non-diagonal IFS that generates both self-affine and non-self-affine FIS simultaneously depending on the free variables and
constrained variables on a general set of interpolation data is constructed in~\cite{chand03}. The attractor of such an IFS
is called Coalescence Hidden-variable Fractal Interpolation Surface (CHFIS). Since the CHFIS passes through the given data points,
any small perturbation in the data points results in the perturbation of the corresponding CHFIS.

The construction of a Coalescence Hidden-variable Fractal Interpolation Function (CHFIF)  of one variable and
investigation of its stability is studied in~\cite{chand07,chand08}. A CHFIF is an important tool in the study of highly uneven
curves like fractures in rocks, seismic fracture, lightening, ECG, etc. However, it can not be applied for the study of
highly uneven surfaces such as surfaces of rocks~\cite{xie97}, sea surfaces~\cite{osborne86},
clouds~\cite{zimmermann92} and many other naturally occuring objects for which the generating function depends on more than
one variable. A CHFIS is a preferred choice for the study of these naturally occurring objects. The quantification of smoothness of
such surfaces in terms of Lipschitz exponent of its corresponding CHFIS is investigated recently
in~\cite{janani08_1}. The purpose of the present paper is to investigate the stability of such CHFIS.
The estimates on error in approximation of the data generating function by CHFIS are found individually
when there is a perturbation in independent, dependent or hidden variable. These estimates together give the total error estimate on
CHFIS when there is perturbation in all these  variables simultaneously.
It is proved that any small perturbation in any of the variables of generalized interpolation data results in only small
perturbation of CHFIS. Unlike the case of CHFIF, the stability of CHFIS is studied with respect to Manhattan metric
and requires the perturbations in generalized interpolation data to be governed by an invariance of ratio condition.
Our results are likely to find applications in texture of surfaces of naturally occurring objects like surfaces of  rocks,
sea surfaces, clouds, etc..

A brief introduction of CHFIS is given in Section~\ref{sec:two}. In Section~\ref{sec:three}, some auxiliary results that are needed
to establish the stability of CHFIS are derived. Our main stability result is established in Section~\ref{sec:four} via stability
results found individually for perturbation in independent variables, the dependent variable and the hidden variable. Finally,
these results are illustrated in Section~\ref{sec:five} through simulation of a sample surface with given data as well as with
perturbed data obtained by small variations of concerned variables in the given data.

\section{Preliminaries}\label{sec:two}
For the given interpolation data  $ \{(x_i,y_j,z_{i,j}):i=0,1,\ldots,N \ \mbox{and}\ j=0,1,\ldots,M\},$ where $z_{i,j} \in (a,b)$ and
$-\infty < a < b < \infty$, consider the generalized interpolation data
$ \triangle=\{(x_i,y_j,z_{i,j},t_{i,j}) :i=0,1,\ldots,N \ \mbox{and} \  j=0,1,\ldots,M \},$
where $t_{i,j} \in (c,d)$ and $-\infty < c <d < \infty$. Set
$I=[x_0,x_N],\ J=[y_0,y_M],\  S=I \times J ,\ D=(a,b) \times (c,d), \
I_n=[x_{n-1},x_n],\ J_m=[y_{m-1},y_m] \ \mbox{and} \  S_{n,m}= I_n \times J_m$  for
$n=1,\ldots,N \ \mbox{and}\ m=1,\ldots,M.$
Let,  the mappings $ \phi_n :I \rightarrow I_n , \ \psi_m: J \rightarrow J_m $  and $ F_{n,m} : S \times D \rightarrow D\ $
for  $n=1,\ldots,N ,\ \mbox{and}\  m=1,\ldots,M\ $ be defined as follows:
\begin{align*}
\phi_n(x)& = x_{n-1} + \frac{x_n - x_{n-1}}{x_N-x_0}\ (x - x_0)  ,\\
\psi_m(y)& = y_{m-1} + \frac{y_m - y_{m-1}}{y_M-y_0}\  (y - y_0) ,\\
\ F_{n,m}(x,y,z,t) & = (e_{n,m}\  x +f_{n,m}\ y + \al_{n,m}\ z + \bt_{n,m}\ t + g_{n,m}\ x y +  k_{n,m}, \nno \\
& \quad \quad \quad \quad \quad \quad \tilde{e}_{n,m}\ x + \tilde{f}_{n,m}\ y + \gm_{n,m}\ t + \tilde{g}_{n,m}\ x y + \tilde{k}_{n,m}).
\end{align*}
Here, $\ \al_{n,m}$  and $\gm_{n,m}$  are free variables chosen such
that $|\al_{n,m}| < 1$ and  $|\gm_{n,m}| < 1.$ $|\bt_{n,m}|$ is a
constrained variable chosen such that $|\bt_{n,m}| + |\gm_{n,m}| <
1.$ Let the function $F_{n,m}$ satisfy the following join-up
condition:
\begin{align} \label{eq:Fnmcond}
\left.
\begin{array}{rcl}
F_{n,m}(x_0,y_0,z_{0,0},t_{0,0})&=&(z_{n-1,m-1},t_{n-1,m-1})  \\
F_{n,m}(x_N,y_0,z_{N,0},t_{N,0})&=&(z_{n,m-1},t_{n,m-1})  \\
F_{n,m}(x_0,y_M,z_{0,M},t_{0,M})&=&(z_{n-1,m},t_{n-1,m})  \\
F_{n,m}(x_N,y_M,z_{N,M},t_{N,M})&=&(z_{n,m},t_{n,m}).
\end{array} \right\}
\end{align}
Using the condition \eqref{eq:Fnmcond}, the values of $e_{n,m},\
f_{n,m},\ g_{n,m},\ k_{n,m},\ \tilde{e}_{n,m},\ \tilde{f}_{n,m},\
\tilde{g}_{n,m}$ and $\tilde{k}_{n,m}$ are determined as follows:
\begin{align} \label{eq:coef} \left. \begin{array}{rcl}
g_{n,m}&=& \frac{z_{n-1,m-1}-z_{n-1,m}-z_{n,m-1}+z_{n,m} - \al_{n,m} z_{eva} - \bt_{n,m} t_{eva}}{(x_0 - x_N) (y_0 - y_M)}  \\
e_{n,m}&=& \frac{z_{n-1,m-1}-z_{n,m-1}-\al_{n,m}(z_{0,0}-z_{N,0})-\bt_{n,m}(t_{0,0}-t_{N,0})-g_{n,m}(x_0-x_N)y_0}{(x_0-x_N)}  \\
f_{n,m}&=& \frac{z_{n-1,m-1}-z_{n-1,m}-\al_{n,m}(z_{0,0}-z_{0,M})-\bt_{n,m}(t_{0,0}-t_{0,M})-g_{n,m}(y_0-y_M)x_0}{(y_0-y_M)}  \\
k_{n,m}&=&  z_{n,m} - e_{n,m} x_{N} - f_{n,m} y_{M} -\al_{n,m} z_{N,M} - \bt_{n,m} t_{N,M} - g_{n,m} x_{N} y_{M}  \\
\tilde{g}_{n,m}&=& \frac{t_{n-1,m-1}-t_{n-1,m}- t_{n,m-1}+ t_{n,m} - \gm_{n,m}\ t_{eva}}{(x_0 - x_N) (y_0 - y_M)} \\
\tilde{e}_{n,m}&=& \frac{t_{n-1,m-1}-t_{n,m-1}- \gm_{n,m}(t_{0,0}- t_{N,0}) - \tilde{g}_{n,m}(x_0 - x_N)y_0}{(x_0 - x_N)} \\
\tilde{f}_{n,m}&=& \frac{t_{n-1,m-1}-t_{n-1,m}- \gm_{n,m}(t_{0,0}- t_{0,M}) - \tilde{g}_{n,m}(y_0 - y_M)x_0}{(y_0 -y_M)}\\
\tilde{k}_{n,m}&=&  t_{n,m} - \tilde{e}_{n,m} x_N - \tilde{f}_{n,m} y_M - \gm_{n,m} t_{N,M} - \tilde{g}_{n,m} x_N y_M
\end{array} \right\} \end{align}
where, $\ z_{eva}=z_{N,M} - z_{N,0} - z_{0,M} + z_{0,0}\  $  and  $\ t_{eva}=t_{N,M} - t_{N,0} - t_{0,M} +  t_{0,0}.$

Now define the functions $G_{n,m}(x,y,z,t)$  and $\om _{n,m}(x,y,z,t)$ as

\noindent$ G_{n,m}(x,y,z,t) = { \small \left\{
    \begin{array}{llll}
    F_{n+1,m}(x_0,y,z,t), & x=x_N, & n=1,\ldots,N-1, & m=1,\ldots,M \\
    F_{n,m+1}(x,y_0,z,t), & y=y_N, & n=1,\ldots,N    & m=1,\ldots,M-1 \\
    F_{n,m}(x,y,z,t), & \mbox{otherwise}. &  &
    \end{array} \right. }$

and $\om _{n,m}(x,y,z,t)= \left( \phi_n(x), \psi_m(y), G_{n,m}(x,y,z,t)\right)$. Then,
$\{\mathbb{R}^4, \om_{n,m} : n=1,\ldots N;\  m=1, \ldots M \}$ constitutes an IFS for the generalized interpolation data $\triangle$.
It is known \cite{chand03} that there exists a metric $\tau$ on $\mathbb{R}^4,$ equivalent to the Euclidean metric, such that the IFS
is hyperbolic with respect to $\tau $ and there exists a unique non-empty compact set $G \subseteq \mathbb{R}^4$ such that
$ G = \bigcup\limits_{n=1}^N \bigcup\limits_{m=1}^M \omega_{n,m}(G).$ The set $G$ is called the attractor of the IFS for the given
interpolation data. Further, G is the graph of a continuous function $F:S \rightarrow \mathbb{R}^2$
such that $F(x_i,y_j)=(z_{i,j},t_{i,j})$ for $i=0,1,\ldots, N$ and $j=0,1,\ldots,M $ i.e.
$G=\{(x,y,F(x,y)):(x,y)\in S \ \mbox{and}\ F(x,y)=(z(x,y),t(x,y))\}.$
\begin{definition}
Let  $ F(x,y) $ be written component-wise as $F(x,y) =(F_1(x,y),F_2(x,y)).$
The \textbf{Coalescence Hidden-variable Fractal Interpolation Surface (CHFIS) } for the given interpolation data
$\{(x_i,y_j,z_{i,j}):i = 0,1,\ldots,N \ \mbox{and}\ j = 0,1,\ldots,M \}$ is defined as the surface  $z=F_1(x,y)$ in
$\mathbb{R}^3 .$
\end{definition}
It is easily seen that the function $F(x,y)$ described above
satisfies
\begin{align*}
F(x,y)=G_{n,m}(\phi_n^{-1}(x), \psi_m^{-1}(y),F(\phi_n^{-1}(x), \psi_m^{-1}(y)))
\end{align*}
for all $(x,y) \in S_{n,m}, \ n=1,2,\ldots,N \ \mbox{and}\
m=1,2,\ldots,M$. Consequently, if the function~$G_{n,m}(x,y,z,t)$ is
written component-wise as $G_{n,m}(x,y,z,t) = (G_{n,m}^1(x,y),
G_{n,m}^2(x,y))$, then
\begin{align*}{\small
\begin{array}{llll}
&F_1(\phi_n(x),\psi_m(y))= G_{n,m}^1(x,y,F_1(x,y),F_2(x,y))= \al_{n,m}\ F_1(x,y) + \bt_{n,m}\ F_2(x,y)
+  p_{n,m}(x,y)\\
&  F_2(\phi_n(x),\psi_m(y)) = G_{n,m}^2(x,y,F_1(x,y),F_2(x,y))= \gm_{n,m}\ F_2(x,y)+q_{n,m}(x,y)
\end{array}}
\end{align*}
where,
\begin{align}\label{eq:pnmqnm}{\small \left.
\begin{array}{rcl}
p_{n,m}(x,y) = e_{n,m}\ x + f_{n,m}\ y +g_{n,m}\ xy +k_{n,m}  \\
q_{n,m}(x,y)= \tilde{e}_{n,m}\ x +\tilde{f}_{n,m}\ y + \tilde{g}_{n,m}\ xy +  \tilde{k}_{n,m}.
\end{array} \right\} }
\end{align}

\section{\sc Some Auxiliary Results}\label{sec:three}
In this section, we develop some results that are needed in the sequel for investigating the stability of CHFIS in
Section~\ref{sec:four}.

Let $ - \infty < x_0^* <x_1^* < \ldots < x_N^*$  and  $ - \infty < y_0^* <y_1^* < \ldots < y_N^*$ be a bounded set of real numbers
in x and y axis. Denote $I^* = [x_0^*,x_N^*]$,  $J^* = [y_0^*,y_M^*]$,    $\ S^* = I^* \times J^*$
$ I_n^*=[x_{n-1}^*,x_n^*],\ J_m^*=[y_{m-1}^*,y_m^*] \ \mbox{and} \  S_{n,m}^*= I_n^* \times J_m^*$  for
$n=1,\ldots,N \ \mbox{and}\ m=1,\ldots,M.$.
Similar to  $\phi_n\ \mbox{and}\ \psi_m$ in section~\ref{sec:two}, we construct homeomorphisms $\phi_n^* \ \mbox{and} \ \psi_m^*$
where $\phi_n^*: I^* \to  I_n^*$ and $\psi_m^*:J^* \to J_m^* .$
Now, define the  map \vspace{-0.2cm}
$ R_{n,m} : S_{n,m} \longrightarrow S_{n,m}^*\ $ by
\begin{align*}
R_{n,m}(x,y) = \left(x_{n-1}^* + \frac{x_n^* - x_{n-1}^*}{x_n -  x_{n-1}}(x - x_{n-1}),\  y_{m-1}^* +
\frac{y_m^* - y_{m-1}^*}{y_m - y_{m-1}}(y - y_{m-1})\right)
\end{align*}
and the linear piecewise map $R: S \to S^*$ as \vspace{-0.2cm}
\begin{align} \label{eq:Rdefn}
R(x,y) = R_{n,m}(x,y)  \ \ \mbox{for all} \ \  x \in I_n \ \ and \ \   y \in J_m
\end{align}
Similarly, the maps $K_{n,m} : S_{n,m}^* \longrightarrow S_{n,m}$ and $K: S^* \rightarrow S$ are defined as \vspace{-0.2cm}
\begin{align*}
K_{n,m}(x^*,y^*) = \left( x_{n-1} + \frac{x_n - x_{n-1}}{x_n^* -
x_{n-1}^*} \left(x^* - x_{n-1}^* \right),\ y_{m-1} + \frac{y_m - y_{m-1}}{y_m^* -
y_{m-1}^*} \left(y^* - y_{m-1}^* \right) \right)
\end{align*}
 and \vspace{-0.5cm}
\begin{align}\label{eq:Kdefn}
K(x^*,y^*) = K_{n,m}(x^*,y^*)  \ \ \mbox{for all} \ \  x^* \in I_n^* \ \ and \ \   y^* \in J_m^*.
\end{align}
It is easily seen that $K = R^{-1}$. Set $\xi_{n,m} (x,y) = (\phi_n(x), \psi_m(y))$ and $\xi_{n,m}^* (x,y)=(\phi_n^*(x), \psi^*_m(y))$.
We assume the following invariance of ratio condition for any two sets
$\triangle= \{(x_i,y_j,z_{i,j},t_{i,j}) :i=0,1,\ldots,N \ \mbox{and} \  j=0,1,\ldots,M \}\ $ and
$\triangle^*=\{(x_i^*,y_j^*,z_{i,j},t_{i,j}) :i=0,1,\ldots,N \ \mbox{and} \  j=0,1,\ldots,M \}$ of the
generalized interpolation data points :
\begin{flalign}\label{eq:cond1}
\frac{(x_0-x_N)}{(x_0^* - x_N^*)}= \frac{(x_{n-1}-x_n)}{(x^*_{n-1}-x^*_n)} \quad \mbox{and} \quad
\frac{(y_0-y_M)}{(y_0^* - y_M^*)}= \frac{(y_{m-1}-y_m)}{(y^*_{m-1}-y^*_m)}
\end{flalign}
By~\eqref{eq:cond1}, we observe that for $n = 1,2,\ldots,N$ and $m=1,2,\ldots,M$, \vspace{-0.2cm}
\begin{align} \label{eq:inv}
\left.
\begin{array}{rcl}
\xi_{n,m}^*(x^*,y^*)&=&\left(R \circ \xi_{n,m} \circ K \right)(x^*,y^*)  \\
F_{n,m}^*(x^*,y^*,z,t)&=&F_{n,m}\left( K(x^*,y^*),z,t \right)
\end{array} \right\}
\end{align}
Thus, the dynamical systems $\{S;\xi_{n,m}\}$ and $\{S^*;\xi_{n,m}^*\}$ are equivalent.
Using this equivalence of dynamical systems, we first prove the following proposition needed for establishing the
smoothness of CHFIS in Proposition~\ref{prop:st2}:
\begin{proposition}\label{prop:st1}
Let $\triangle= \{(x_i,y_j,z_{i,j},t_{i,j}) :i=0,1,\ldots,N \ \mbox{and} \  j=0,1,\ldots,M \}\ $ and
$\triangle^*=\{(x_i^*,y_j^*,z_{i,j},t_{i,j}) :i=0,1,\ldots,N \ \mbox{and} \  j=0,1,\ldots,M \}$
be any two sets of  generalized interpolation data for the IFS $ \{\Re^4;\om_{n,m}(x,y,z,t), n=1,\ldots,N; m=1,\ldots,M \}$
and  $\{\Re^4;\om_{n,m}^*(x^*,y^*,z,t), n=1,\ldots,N; m=1,\ldots,M \}$  respectively,  with the same choice of free variables and
constrained  variable. Let the points of $\triangle$ and $\triangle^*$ satisfy the invariance of ratio condition~\eqref{eq:cond1}.
Then, $F$, as defined in Section~\ref{sec:two}, is the CHFIS  associated with
$\{\Re^4;\om_{n,m}(x,y,z,t),\ n=1,\ldots,N; m=1,\ldots,M \}$
iff  $\ F\circ R^{-1}$ is the CHFIS  associated with $\{\Re^4;\om_{n,m}^*(x^*,y^*,z,t),\ n=1,\ldots,N; m=1,\ldots,M\}$,
where R is defined by \eqref{eq:Rdefn}.
\end{proposition}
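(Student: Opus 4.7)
The plan is to exploit the self-referential characterization of the CHFIS. By the uniqueness of the attractor recalled at the end of Section~\ref{sec:two}, $F$ is the CHFIS associated with $\{\mathbb{R}^4;\om_{n,m}\}$ iff $F:S\to\mathbb{R}^2$ is the unique continuous function satisfying $F(x_i,y_j)=(z_{i,j},t_{i,j})$ together with the fixed-point equation
\[
F(\phi_n(x),\psi_m(y)) \;=\; G_{n,m}\bigl(x,y,F(x,y)\bigr),\qquad (x,y)\in S,\ 1\le n\le N,\ 1\le m\le M,
\]
and likewise, with stars throughout, for $\{\mathbb{R}^4;\om_{n,m}^*\}$. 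So it suffices to show that $F^*:=F\circ R^{-1}=F\circ K$ satisfies the starred analogue of this equation and then invoke uniqueness; the converse direction follows by interchanging the roles of $\triangle$ and $\triangle^*$, since $R^{-1}=K$ plays the same role as $R$.

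First I would verify the interpolation and continuity of $F^*$: since $K(x_i^*,y_j^*)=(x_i,y_j)$ by the definition of $K$ and condition~\eqref{eq:cond1}, we get $F^*(x_i^*,y_j^*)=F(x_i,y_j)=(z_{i,j},t_{i,j})$, and continuity is immediate. Next, applying $K$ on the left to the first line of~\eqref{eq:inv} gives $K\circ \xi_{n,m}^* = \xi_{n,m}\circ K$, so setting $(u,v)=K(x^*,y^*)$,
\[
F^*\bigl(\phi_n^*(x^*),\psi_m^*(y^*)\bigr) \;=\; F\bigl(K\circ\xi_{n,m}^*(x^*,y^*)\bigr) \;=\; F\bigl(\xi_{n,m}(u,v)\bigr) \;=\; F\bigl(\phi_n(u),\psi_m(v)\bigr).
\]
Using the self-referential equation for $F$ on the right-hand side gives
\[
F^*\bigl(\phi_n^*(x^*),\psi_m^*(y^*)\bigr) \;=\; G_{n,m}\bigl(u,v,F(u,v)\bigr) \;=\; G_{n,m}\bigl(K(x^*,y^*),F^*(x^*,y^*)\bigr).
\]

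It then remains to identify $G_{n,m}(K(x^*,y^*),\cdot)$ with $G_{n,m}^*(x^*,y^*,\cdot)$. On the interior pieces this is exactly the second identity of~\eqref{eq:inv}, so $F_{n,m}(K(x^*,y^*),z,t)=F_{n,m}^*(x^*,y^*,z,t)$; on the boundary gluing rules (the $F_{n+1,m}(x_0,\cdot,\cdot,\cdot)$ and $F_{n,m+1}(\cdot,y_0,\cdot,\cdot)$ branches used to define $G_{n,m}$ and $G_{n,m}^*$) the same identity holds after a short case check, since~\eqref{eq:cond1} makes $K$ carry $x_N^*\mapsto x_N$ and $y_M^*\mapsto y_M$ consistently. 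The main obstacle I anticipate is precisely this boundary book-keeping: one must confirm that the invariance of ratio condition not only sends adjacent subrectangles to adjacent subrectangles but also aligns their shared boundary interpolants, so that the chosen branch of $G_{n,m}^*$ at $x^*=x_N^*$ or $y^*=y_M^*$ corresponds under $K$ to the branch of $G_{n,m}$ at $x=x_N$ or $y=y_M$. Once that is verified, $F^*$ satisfies the starred self-referential equation together with the starred interpolation values, so by uniqueness $F^*$ coincides with the CHFIS of $\{\mathbb{R}^4;\om_{n,m}^*\}$, giving the forward implication; the reverse follows by the symmetric argument with $(\triangle,\triangle^*,R)$ replaced by $(\triangle^*,\triangle,K)$.
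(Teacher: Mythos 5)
Your proposal is correct and follows essentially the same route as the paper: both use the equivalence relations in~\eqref{eq:inv} (derived from the invariance of ratio condition) to transfer the self-referential functional equation from the unstarred system to the starred one and conclude by uniqueness, with the converse obtained symmetrically via $K=R^{-1}$. Your extra attention to the boundary branches of $G_{n,m}$ and the explicit check of the interpolation values are details the paper's computation passes over silently, but they do not change the argument.
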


\begin{proof}
Let $F(x,y)$ be the CHFIS associated with the IFS $\{\Re^4;\om_{n,m}(x,y,z,t), n=1,\ldots,N, \\m=1,\ldots,M\}$.
It follows from \eqref{eq:inv} that, \vspace{-0.5cm}
\begin{align*}
F_{n,m}^*\left(\xi_{n,m}^{*-1}(x^*,y^*),\ F \circ R^{-1}(\xi_{n,m}^{*-1}(x^*,y^*))\right)
&= F_{n,m}\left(K \circ \xi_{n,m}^{*-1}(x^*,y^*),\ F \circ R^{-1}(\xi_{n,m}^{*-1}(x^*,y^*))\right)\\
&= F_{n,m}\left(\xi_{n,m}^{-1}\circ K (x^*,y^*), F(\xi_{n,m}^{-1}\circ K (x^*,y^*))\right) \\
&= F_{n,m} \left(\phi_n^{-1}(x),\ \psi_m^{-1}(y),\ F(\phi_n^{-1}(x),\ \psi_m^{-1}(y)) \right) \\
&= F(x,y) \\
&= F \circ R^{-1}(x^*,y^*)
\end{align*}
Thus,  $F \circ R^{-1}$ is the CHFIS associated with $\{\Re^4;\om_{n,m}^*(x^*,y^*,z,t), n = 1,\ldots,N, m =  1,\ldots,M\}$.

Conversely, assume that the above identity holds for CHFIS $F \circ R^{-1}$. \\
Then,  for $x \in I_n$ and $y \in  J_m$,
\begin{align*}
F_{n,m}\left(\xi_{n,m}^{-1}(x,y),\ F(\xi_{n,m}^{-1}(x,y))\right)
&= F_{n,m}^*\left(K^{-1}\circ \xi_{n,m}^{-1}(x,y),\ F(\xi_{n,m}^{-1}(x,y))\right) \\
&= F_{n,m}^* \left(\xi_{n,m}^{*-1}\circ R (x,y),\ F \circ R^{-1}(\xi_{n,m}^{*-1}\circ R (x,y)) \right) \\
&= F_{n,m}^* \left(\phi_n^{*-1}(x^*),\ \psi_m^{*-1}(y^*),\ F \circ R^{-1} (\phi_n^{*-1}(x^*),\ \psi_m^{*-1}(y^*)) \right) \\
&= F \circ R^{-1}(x^*,y^*) \\
&= F (x,y)
\end{align*}
Hence,  $F(x,y)$ is the CHFIS associated with the IFS $\{\Re^4;\om_{n,m}(x,y,z,t), \ n = 1,\ldots,N, \ \mbox{and} \\ m = 1,\ldots,M\}$.
\qed
\end{proof}

Let $X=(x_1,x_2)$ , $Y=(y_1,y_2) \in \mathbb{R}^2$ and $d_M(X,Y)=\sum\limits_{i=1}^2 |x_i-y_i|$ be the \emph{Manhattan metric}
on $\mathbb{R}^2.$ A function $F : \mathbb{R}^2  \rightarrow \mathbb{R}$ is said to be a \emph{Lipschitz function of order~$\delta$}
(written as $F \in Lip\ \delta$) if it satisfies the condition $|F(X)-F(Y)| \leq c \ \left[d_M(X,Y)\right]^\delta, \ \delta \in (0,1]$
and $c$ is a real number. The real number $\delta$ is called the Lipschitz exponent.


Define
\begin{align*}
\begin{matrix}
&Q_n(F_1(X))&:= &\sum\limits_{r_1,\ldots,r_n = 1}^N\ \sum\limits_{s_1,\ldots,s_n = 1}^N
\bigg( \chi_{S_{r_1,\ldots,r_n,s_1,\ldots,s_n}}(X)
\frac{b_{r_1,\ldots,r_n,s_1,\ldots,s_n}}{|S_{r_1,\ldots,r_n,s_1,\ldots,s_n}|} \bigg)&
\end{matrix}
\end{align*}
where, $\ \chi_{S_{r_1,\ldots,r_n,s_1,\ldots,s_n}}(X)=\Bigg\{
\begin{array}{ll}
1, & X \in S_{r_1,\ldots,r_n,s_1,\ldots,s_n} \\
0, & \mbox{otherwise}.
\end{array}$ \vspace{0.3cm}

and $ b_{r_1,\ldots,r_n,s_1,\ldots,s_n} = \int\limits_{I_{r_1,\ldots,r_n}} \int\limits_{J_{s_1,\ldots,s_n}} F_1(x,y)\ dy\ dx$.
\vspace{0.5cm}

It is known \cite{janani08_1} that $Q_n(F_1(X))$ converges to $F_1(X)$ uniformly with respect to Manhattan metric.
Using this fact and finding a bound on maximum distance between $Q_n(F_1(X))$ and $Q_n(F_1(\bar{X}))$,
the Lipschitz exponent of CHFIS $F_1$ is found in the following proposition
when $p_{n,m}$  and $q_{n,m},$ given by \eqref{eq:pnmqnm},  belong to $ Lip\ 1$.

\begin{proposition}\label{prop:st2}
Let $F_1(x,y), \ 0 \leq x ,y \leq \frac{1}{2},$ be the CHFIS for the interpolation data
$\triangle_0= \{(x_i,y_j,z_{i,j},t_{i,j}) :i,j=0,1,\ldots,N \}\ $ where, $x_0=y_0=0 \ \mbox{and}\ x_N=y_N=1/2 .$
Then, $F_1 \in Lip\ \delta $ for some $\de \in (0,1].$
\end{proposition}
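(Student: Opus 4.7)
The strategy is to exploit the self-referential identities
\[
F_1(\phi_n(x),\psi_m(y)) = \alpha_{n,m}F_1(x,y)+\beta_{n,m}F_2(x,y)+p_{n,m}(x,y),\qquad F_2(\phi_n(x),\psi_m(y)) = \gamma_{n,m}F_2(x,y)+q_{n,m}(x,y),
\]
together with the uniform convergence $Q_n(F_1)\to F_1$ already quoted from \cite{janani08_1}. The plan is to obtain a geometric decay estimate on the rescaled coefficients $b_{r_1,\ldots,r_n,s_1,\ldots,s_n}/|S_{r_1,\ldots,r_n,s_1,\ldots,s_n}|$ (the level-$n$ cell averages of $F_1$), convert it into a modulus-of-continuity estimate for the step functions $Q_n(F_1)$, and then pass to the limit to get a H\"older bound on $F_1$ itself.

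First I would iterate the change of variables induced by $\xi_{r,s}=(\phi_r,\psi_s)$ on the integrals defining $b_{r_1,\ldots,r_n,s_1,\ldots,s_n}$. A single step gives
\[
\frac{1}{|S_{r,s}|}\int_{S_{r,s}} F_1 \;=\; 4\Bigl(\alpha_{r,s}\!\int_S F_1 + \beta_{r,s}\!\int_S F_2 + \int_S p_{r,s}\Bigr),
\]
because $x_N-x_0=y_M-y_0=\tfrac12$ makes each Jacobian equal to $4(x_r-x_{r-1})(y_s-y_{s-1})$; the analogous identity for $F_2$ uses only $\gamma_{r,s}$. Iterating the pair $(F_1,F_2)$ version $n$ times along the address $(r_1,s_1),\ldots,(r_n,s_n)$ produces a formula in which the cell average of $F_1$ is, up to a polynomial remainder coming from the $p_{r,s}$, $q_{r,s}$ pieces (which are in $\mathrm{Lip}\,1$), expressible as a product of contraction factors $\alpha,\beta,\gamma$ acting on the global averages of $F_1,F_2$. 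Setting $\sigma:=\max_{n,m}(|\alpha_{n,m}|+|\beta_{n,m}|,|\gamma_{n,m}|)<1$, this yields a bound of the form $|b_{r_1,\ldots,r_n,s_1,\ldots,s_n}|/|S_{r_1,\ldots,r_n,s_1,\ldots,s_n}| \le C\sigma^n+C'$, or more precisely, that adjacent cell averages at level $n$ differ by at most $C\sigma^n$ modulo a polynomial Lipschitz correction.

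Next, writing $a=\max_{n,m}\max\{2(x_n-x_{n-1}),\,2(y_m-y_{m-1})\}<1$ for the largest horizontal contraction, every level-$n$ cell has diameter (in Manhattan metric) bounded above by $a^n$. Given $X,\bar X\in S$ with $d_M(X,\bar X)=:d$, I would choose $n=n(d)$ with $a^{n+1}<d\le a^n$, so that $X$ and $\bar X$ lie in the same or in a bounded number of adjacent level-$n$ cells. The cell-average estimate above then gives $|Q_n(F_1(X))-Q_n(F_1(\bar X))|\le C\sigma^n$. Combining with the triangle inequality
\[
|F_1(X)-F_1(\bar X)|\le |F_1(X)-Q_n(F_1(X))|+|Q_n(F_1(X))-Q_n(F_1(\bar X))|+|Q_n(F_1(\bar X))-F_1(\bar X)|,
\]
and summing a telescoping series $\sum_{k\ge n}|Q_{k+1}(F_1)-Q_k(F_1)|$ (which inherits the same $\sigma^k$ decay) to control the outer terms, I get $|F_1(X)-F_1(\bar X)|\le C'\sigma^n\le C'' d^{\,\delta}$ with $\delta=\log\sigma/\log a\in(0,1]$, provided $\sigma\ge a$ (otherwise $\delta=1$ and the estimate is even stronger).

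The main obstacle will be the iterated recursion in the first step: because $F_1$ is coupled to $F_2$ through the $\beta_{n,m}$ term, the bookkeeping at each iteration generates cross-terms mixing products of $\alpha$'s, $\beta$'s, and $\gamma$'s, and one has to verify that, under the constrained-variable condition $|\beta_{n,m}|+|\gamma_{n,m}|<1$, all such cross-terms still collapse into a uniform geometric decay controlled by a single contraction constant $\sigma<1$. Once this packaging is done correctly, the remainder of the argument is essentially a standard Bernstein-type passage from cell-average decay in a multiresolution expansion to a pointwise H\"older estimate on the limit function.
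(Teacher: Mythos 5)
Your overall route is the same as the paper's: establish that $p_{n,m},q_{n,m}\in Lip\ 1$ with explicit constants, bound the modulus of continuity of the cell--average operators $Q_n(F_1)$ by iterating the self-referential recursion, and pass to the limit using the known uniform convergence $Q_n(F_1)\to F_1$. However, there is a genuine gap in the key quantitative step. You set $\sigma:=\max_{n,m}\bigl(|\alpha_{n,m}|+|\beta_{n,m}|,\ |\gamma_{n,m}|\bigr)$ and assert $\sigma<1$, but the hypotheses only give $|\alpha_{n,m}|<1$, $|\gamma_{n,m}|<1$ and $|\beta_{n,m}|+|\gamma_{n,m}|<1$; there is no constraint on $|\alpha_{n,m}|+|\beta_{n,m}|$. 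Indeed the paper's own numerical example (Section 5) uses $\alpha_{n,m}=0.7$, $\beta_{n,m}=0.4$, so your $\sigma=1.1>1$ there. With $\sigma\ge 1$ your geometric bound $C\sigma^n$ on the level-$n$ oscillations gives a nonpositive exponent $\log\sigma/\log a$, and your telescoping series $\sum_k|Q_{k+1}(F_1)-Q_k(F_1)|$ need not converge, so the argument as written does not produce any $\delta\in(0,1]$. A second, related problem: for a non-uniform grid, two points with $d_M(X,\bar X)\approx a^n$ can be separated by a number of level-$n$ cells that grows like $(a/a_{\min})^n$, so ``a bounded number of adjacent level-$n$ cells'' is false in general; the chaining across cells is precisely what introduces the extra growth factors.

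The paper's proof sidesteps both issues by keeping the explicit factor $d_M(X,\bar X)$ (coming from the $Lip\ 1$ estimates on $p_{n,m}$ and $q_{n,m}$) in front of geometric sums in quantities $\bar\Omega$ and $\bar\Gamma$ that are \emph{allowed to exceed} $1$ (see the displayed inequality \eqref{eq:stmain}), and then performs a case analysis ($\bar\Omega>1$, $=1$, $<1$; likewise for $\bar\Gamma$), balancing the growth of these sums against the decay of $d_M(X,\bar X)$ to extract a H\"older exponent $\delta\in(0,1]$. To repair your argument you would need to replace the single contraction constant $\sigma$ by this two-parameter bookkeeping: the homogeneous part of your recursion is governed by products of the triangular matrices $\left(\begin{smallmatrix}\alpha_{n,m} & \beta_{n,m}\\ 0 & \gamma_{n,m}\end{smallmatrix}\right)$, whose entries decay like $n\rho^{n-1}$ with $\rho=\max(|\alpha|,|\gamma|)<1$ only \emph{within} a single address, while the inter-cell comparisons contribute the additional sums that can make the effective rate exceed $1$. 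Without that correction the proof does not go through under the stated hypotheses.
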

\begin{proof}
Using the values of $e_{n,m},\ f_{n,m},\ g_{n,m},\ k_{n,m},\
\tilde{e}_{n,m},\ \tilde{f}_{n,m},\ \tilde{g}_{n,m}\ $  and $\
\tilde{k}_{n,m}$ as defined in~\eqref{eq:coef},  we find that, for
$\ 0 < x < \bar{x} < \frac{1}{2}\ $ and $\ 0 < y < \bar{y} <
\frac{1}{2}\ $,
\begin{align*}
\begin{array}{lll}
&|p_{n,m}(x,y) - p_{n,m}(\bar{x},\bar{y})|  \leq 8 \cdot [(1 +\al)\ Z_{max} + \bt\ T_{max}] \cdot [|x - \bar{x}|+ |y - \bar{y}| ]\\
& |q_{n,m}(x,y) - q_{n,m}(\bar{x},\bar{y})|  \leq 8 \cdot [( 1 +\gm)\ T_{max} ] \cdot [|x - \bar{x}|+ |y - \bar{y}| ]
\end{array}
\end{align*}
where,  $Z_{max}=\max \{|z_{i,j}-z_{k,l}|\ \}$  and $T_{max}= \max
\{|t_{i,j}-t_{k,l}|\ \}\ $ for  $i,j,k,l =0,\ldots,N$. Therefore,
the bound on $|Q_n(F_1(X))- Q_n(F_1(\bar{X}))|$ is obtained as
\begin{align}\label{eq:stmain}
|Q_n&(F_1(X)) - Q_n(F_1(\bar{X}))|  \nno \\ & \leq M \big[d_M(X,\bar{X}) \big] \bigg\{[1 + ( \ \bar{\Om})+\ldots+
(\ \bar{\Om})^{(m-2)} ]  \nno \\ & \quad  \mbox{}+ |\bt| \cdot [( \ \bar{\Gm}) [1 + \ldots+(\ \bar{\Om})^{(m-4)} ] +
(\ \bar{\Gm})^{2}[1 + \ldots+ ( \ \bar{\Om})^{(m-5)} ] + \ldots + (\ \bar{\Gm})^{m-3} ] \bigg\}
\end{align}
The above inequality~\eqref{eq:stmain} is similar to the Inequality $4.1$ of ~\cite{janani08_1} except that,
in this case we have, $\bar{\Om}=\Om=\Th$  and $\bar{\Gm}=\Gm$. By employing the same technique of proof here and using the inequalities
$ 0 < -(|x|+|y|)^{\tau} \log (|x|+|y|) \leq \frac{1}{\tau e} $ and
$ 0 < (|x|+|y|)^{\tau} (\log (|x|+|y|))^2 \leq \frac{4}{\tau^2 e^2}$,
we find that, for different magnitudes of $\bar{\Om}$ and $\bar{\Gm}$
$(\bar{\Om} > 1, \bar{\Om} = 1 \ \mbox{or}\ \bar{\Om} < 1 ; \ \bar{\Gm} > 1, \bar{\Gm} = 1 \ \mbox{or}\ \bar{\Gm} < 1)$,
the bound on $|Q_n(F_1(X))- Q_n(F_1(\bar{X}))|$ is obtained as
$|Q_n(F_1(X)~-~Q_n(F_1(\bar{X}))|\leq~M\big[d_M(X,\bar{X})\big]^{\delta}$ for some $\de \in (0,1].$ Using the fact
$Q_n(F_1(X))$ converges to $F_1(X)$ uniformly with respect to Manhattan metric, we get $F_1 \in Lip\ \delta .$
\qed
\end{proof}
\begin{corollary}\label{cor:st1}
Let $F_1$ be the CHFIS for the generalized interpolation data
$\triangle= \{(x_i,y_j,z_{i,j},t_{i,j}) :i,j=0,1,\ldots,N \}\ $.
Then, there exist constants $\bar{K}$ and $\delta$, independent of
$z_{n,m}$ and $t_{n,m}$, such that
\begin{align}\label{eq:MKbnd}
|Q_n(F_1(X))-Q_n(F_1(\bar{X}))| \leq M  \bar{K}\
\big[d_M(X,\bar{X})\big]^{\delta}
\end{align}
where,  $X=(x,y)\ \mbox{and}\ \bar{X}=( \bar{x},\bar{y})$.
\end{corollary}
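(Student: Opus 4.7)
The plan is to revisit the proof of Proposition~\ref{prop:st2} and extract from it the explicit dependence of the absorbing constant on the data values $z_{n,m}, t_{n,m}$. The corollary is essentially a refinement of that proposition in which the single constant there is factored as $M\cdot\bar{K}$, with $M$ carrying all of the dependence on the data and $\bar{K}$ depending only on the free variables $\alpha_{n,m}, \gamma_{n,m}$, the constrained variable $\beta_{n,m}$, and the grid parameters $x_i, y_j$.

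First I would verify that the coefficients $e_{n,m}, f_{n,m}, g_{n,m}, k_{n,m}$ and their tilded counterparts defined by \eqref{eq:coef} depend \emph{linearly} on the $z_{i,j}$ and $t_{i,j}$, with multipliers that involve only $x_0, x_N, y_0, y_M, \alpha_{n,m}, \beta_{n,m}, \gamma_{n,m}$. Consequently, the two Lipschitz bounds for $p_{n,m}$ and $q_{n,m}$ that drive the proof of Proposition~\ref{prop:st2} split as a data-independent prefactor times $Z_{max}$ or $T_{max}$. Taking $M$ to be any fixed aggregate such as $(1+\alpha)Z_{max}+\beta T_{max}+(1+\gamma)T_{max}$ (or simply $Z_{max}+T_{max}$ after a harmless adjustment of $\bar{K}$) captures the entire data dependence in a single factor, while the remaining multiplicative constant is a function of $\alpha, \beta, \gamma$ alone.

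Second, I would rewrite \eqref{eq:stmain} in the form $|Q_n(F_1(X))-Q_n(F_1(\bar{X}))|\leq M\, d_M(X,\bar{X})\, H_n(\alpha,\beta,\gamma)$, where the bracketed expression $H_n$ involves only $\bar{\Omega}$ and $\bar{\Gamma}$ and is therefore independent of the data. The final H\"older-type estimate in Proposition~\ref{prop:st2} is produced by applying the data-free inequalities $0<-(|x|+|y|)^{\tau}\log(|x|+|y|)\leq\frac{1}{\tau e}$ and $0<(|x|+|y|)^{\tau}(\log(|x|+|y|))^{2}\leq\frac{4}{\tau^{2}e^{2}}$ to $H_n\cdot d_M(X,\bar{X})$, yielding a bound of the form $\bar{K}\,[d_M(X,\bar{X})]^{\delta}$ with a constant $\bar{K}$ assembled from $\alpha, \beta, \gamma$ alone.

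The one subtlety to confirm, rather than a real obstacle, is that the exponent $\delta\in(0,1]$ itself is data-independent. This is so because, as in the proof of Proposition~\ref{prop:st2}, $\delta$ is determined by which of the three regimes ($<1, =1, >1$) the quantities $\bar{\Omega}$ and $\bar{\Gamma}$ fall into, and these are functions only of the contractivity parameters $\alpha_{n,m}, \beta_{n,m}, \gamma_{n,m}$. Gathering all the data-independent multiplicative factors into $\bar{K}$ then delivers \eqref{eq:MKbnd} and completes the proof.
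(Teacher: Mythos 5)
There is a genuine gap: you have misidentified what the corollary adds to Proposition~\ref{prop:st2}. The proposition is stated and proved only for data on the normalized square $[0,\frac{1}{2}]\times[0,\frac{1}{2}]$ (its explicit Lipschitz constants for $p_{n,m}$ and $q_{n,m}$, e.g.\ the factor $8$, come from $(x_0-x_N)(y_0-y_M)=\frac{1}{4}$), whereas the corollary asserts the bound for an \emph{arbitrary} generalized interpolation data set $\triangle$ on $S=[x_0,x_N]\times[y_0,y_M]$. The paper's proof handles this by constructing the linear homeomorphism $R:S\to[0,\frac{1}{2}]\times[0,\frac{1}{2}]$, invoking Proposition~\ref{prop:st1} to conclude that $F_1\circ R^{-1}$ is the CHFIS for the rescaled data $\triangle_0^*$, applying Proposition~\ref{prop:st2} to $F_1\circ R^{-1}$, and then absorbing the distortion of the Manhattan metric under $R$ into the extra constant $\bar{K}$, which therefore depends only on the $x_i,y_j$ and is manifestly independent of $z_{n,m}$ and $t_{n,m}$. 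Your proposal never invokes $R$ or Proposition~\ref{prop:st1}, and every estimate you cite (the $p_{n,m},q_{n,m}$ bounds, inequality~\eqref{eq:stmain}, the logarithmic inequalities) is taken verbatim from the normalized-domain argument; as written, your argument only re-derives Proposition~\ref{prop:st2} on $[0,\frac{1}{2}]^2$ with its constant split into two factors, and does not establish \eqref{eq:MKbnd} for general $\triangle$.

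Your secondary observations are not wrong --- the coefficients in \eqref{eq:coef} are indeed affine in the $z_{i,j},t_{i,j}$, so the data dependence of the constant in Proposition~\ref{prop:st2} sits entirely in $M$ (through $Z_{max}$ and $T_{max}$), and $\delta$ is determined by the regimes of $\bar{\Om}$ and $\bar{\Gm}$, hence is data-independent. But this only explains why $M$ may be allowed to carry the $z,t$ dependence; it does not produce the factor $\bar{K}$, whose entire purpose is to account for the change of domain. To repair the proof you would either need to supply the rescaling step via $R$ and Proposition~\ref{prop:st1} (the paper's route), or redo the whole computation of Proposition~\ref{prop:st2} from scratch on a general rectangle, tracking how every constant depends on $x_N-x_0$ and $y_M-y_0$ --- neither of which your proposal does.
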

\begin{proof}
 Define a linear homeomorphism $R:S \longrightarrow [0,\frac{1}{2}] \times
 [0,\frac{1}{2}]$ that transforms the given interpolation data $\triangle$ to the data $\triangle_0^* =$
$ \{(0,0,z_{0,0},t_{0,0}),(x^*_1,0,z_{1,0},t_{1,0}),
\ldots,(\frac{1}{2},0,z_{N,0},t_{N,0}),\\(0,y^*_1,z_{0,1},t_{0,1}),
\ldots,  (0,\frac{1}{2},z_{0,N},t_{0,N}),\ldots,(\frac{1}{2},\frac{1}{2},z_{N,N},t_{N,N}) \}$.
Proposition (\ref{prop:st2}) applied on the data $\triangle_0^* $
gives $ |Q_n(F_1 \circ R^{-1} (X^*))-Q_n(F_1 \circ R^{-1}(\bar{X^*}))| \leq M
\big[d_M(X^*,\bar{X^*})\big]^{\delta}.$ Now, using Proposition
(\ref{prop:st1}) with $\triangle$ and $\triangle_0^* $, the corollary follows.
\qed
\end{proof}

\section{\sc Stability of CHFIS}\label{sec:four}

To prove the main stability result of CHFIS,  we need to investigate its stability with respect to
perturbations in
independent variables, the dependent variable and the hidden variable of the generalized interpolation data. We develop these
results first.

The following theorem gives the effect on stability of CHFIS due to perturbation in independent variables:

\begin{theorem} \label{th:stth1}
Let  $F_1$ and $G_1,$ with the same choice of free variables and constrained variable, be CHFIS respectively
for the generalized interpolation data $\triangle= \{(x_i,y_j,z_{i,j},t_{i,j}) :i,j=0,1,\ldots,N \} $ and
$\triangle^*=\{(x_i^*,y_j^*,z_{i,j},t_{i,j}) :i,j=0,1,\ldots,N \}$ that
satisfy the the invariance of ratio condition~\eqref{eq:cond1}  and  $S^* \subset S$, where $S^*=[x_0^*,x_N^*] \times [y_0^*,y_M^*]$
and $S=[x_0,x_N] \times [y_0,y_M]$.
Then, \vspace{-0.5cm}
\begin{align}\label{eq:st1}
\lefteqn{\left\|F_1-G_1\right\|_{\infty}}\nno \\ & \quad \leq \bar{M} \left[\frac{2\bt\gm}{(1-\al)(1-\gm)} +
\frac{(1+\al)}{(1-\al)}\right]
\max \bigg\{\left(|x_n-x_n^*| + |y_m-y_m^*|\right)^{\delta} : n,m =0,\ldots,N \bigg \}
\end{align}
\end{theorem}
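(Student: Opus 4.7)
The plan is to leverage Proposition~\ref{prop:st1} to identify $G_1$ with a reparameterisation of $F_1$, so that the difference $F_1 - G_1$ reduces to a single displacement that can be controlled by the Lipschitz property of $F_1$ established in Proposition~\ref{prop:st2} and Corollary~\ref{cor:st1}. Since $\triangle$ and $\triangle^*$ share the same values $z_{i,j}$ and $t_{i,j}$ and satisfy the invariance of ratio condition~\eqref{eq:cond1}, Proposition~\ref{prop:st1} yields that $F_1\circ R^{-1}$ is itself a CHFIS for the IFS $\{\Re^4;\om_{n,m}^{*}\}$. By uniqueness of the attractor, this forces
\begin{align*}
G_1(x^*,y^*) \;=\; F_1\bigl(K(x^*,y^*)\bigr) \qquad \text{for all } (x^*,y^*)\in S^*,
\end{align*}
where $K=R^{-1}$ is the piecewise affine map defined in~\eqref{eq:Kdefn}.

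Next, I would estimate $d_M\!\bigl((x^*,y^*),K(x^*,y^*)\bigr)$. For $(x^*,y^*)\in S^*_{n,m}$, parameterise by $t=(x^*-x^*_{n-1})/(x^*_n-x^*_{n-1})\in[0,1]$; since $K_{n,m}$ is affine and matches the corner coordinates of $S_{n,m}$, a direct barycentric computation gives
\begin{align*}
x^* - K_{n,m,x}(x^*,y^*) \;=\; (1-t)(x^*_{n-1}-x_{n-1}) + t(x^*_n-x_n),
\end{align*}
and analogously for the $y$-component. Taking the $\ell^1$-sum and the maximum over cells yields
\begin{align*}
d_M\!\bigl((x^*,y^*),K(x^*,y^*)\bigr) \;\leq\; \max_{n,m}\bigl(|x_n-x^*_n|+|y_m-y^*_m|\bigr).
\end{align*}

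Combining these two observations with the Lipschitz estimate of Corollary~\ref{cor:st1} gives
\begin{align*}
|F_1(x^*,y^*)-G_1(x^*,y^*)| \;\leq\; L\cdot\Bigl(\max_{n,m}(|x_n-x^*_n|+|y_m-y^*_m|)\Bigr)^{\delta},
\end{align*}
and taking the supremum over $(x^*,y^*)\in S^*$ delivers the bound~\eqref{eq:st1} provided the Lipschitz constant $L$ can be shown to equal $\bar M\bigl[\frac{2\bt\gm}{(1-\al)(1-\gm)}+\frac{1+\al}{1-\al}\bigr]$.

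The main obstacle is precisely this last step: identifying the explicit prefactor. The constant $\bar K$ in Corollary~\ref{cor:st1} is left implicit, so I would retrace the recursive estimate~\eqref{eq:stmain} from the proof of Proposition~\ref{prop:st2} and resum the two geometric tails that appear there. With $\bar\Om=\al$ and $\bar\Gm=\gm$ (as remarked in that proof), the pure $F_1$-recursion contributes $\sum_{k\geq 0}(\alpha^k+\alpha^{k+1})=\frac{1+\al}{1-\al}$, while the coupling term picks up the factor $|\bt|$ together with a double geometric series over $\gm$ and $\al$, yielding $\frac{2\bt\gm}{(1-\al)(1-\gm)}$. Confirming these two identifications cleanly, and absorbing all remaining data-dependent prefactors (notably $Z_{\max}$, $T_{\max}$ and the grid scales) into the single constant $\bar M$, is the bookkeeping heart of the argument; everything else is a direct consequence of Proposition~\ref{prop:st1} together with the affine geometry of $K$.
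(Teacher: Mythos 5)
Your route is sound but genuinely different from the paper's, and it is worth spelling out the difference. You invoke Proposition~\ref{prop:st1} together with uniqueness of the attractor to conclude $G=F\circ R^{-1}$ outright (legitimate here, since $\triangle$ and $\triangle^*$ share the same $z_{i,j},t_{i,j}$ and satisfy \eqref{eq:cond1}, which is exactly the hypothesis of that proposition), after which the whole theorem collapses to one application of the Lipschitz estimate of Corollary~\ref{cor:st1} to the displacement $d_M(X^*,K(X^*))$, which you bound correctly. The paper does not make this identification: it instead writes the functional equation satisfied by $G\circ R$ on $S$, splits $|G_1(R(\phi_n(x),\psi_m(y)))-F_1(\phi_n(x),\psi_m(y))|$ by the triangle inequality into an $\al$-contraction of $\|G_1-F_1\|_\infty$, a $\bt$-coupling to $\|G_2\circ R-F_2\|_\infty$, and a Lipschitz term $|F_1\circ R-F_1|$, then solves the resulting recursions for $F_2$ and $F_1$ in turn. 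That recursion — not the Lipschitz constant — is the source of the factors $\frac{1+\al}{1-\al}$ and $\frac{2\bt\gm}{(1-\al)(1-\gm)}$ in \eqref{eq:st1}; so the "main obstacle" you identify in your last paragraph (trying to recover that bracket by resumming the geometric tails inside the proof of Proposition~\ref{prop:st2}) is aimed at the wrong target and would not produce that expression. The good news is that you do not need it: your argument gives $\|F_1-G_1\|_\infty\leq \bar{M}\,\max\{(|x_n-x_n^*|+|y_m-y_m^*|)^{\delta}\}$, and since the bracketed factor satisfies $\frac{2\bt\gm}{(1-\al)(1-\gm)}+\frac{1+\al}{1-\al}\geq 1$, your sharper bound implies \eqref{eq:st1} immediately. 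In short: your approach buys a cleaner proof and a better constant at the price of leaning on uniqueness of the CHFIS; the paper's approach is self-contained at the IFS level and is the template reused in Theorems~\ref{th:stth2} and~\ref{th:stth3}, where no such identification is available.
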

\begin{proof}
Let the function $G(x,y)$ corresponding to the generalized interpolation points
$ \triangle^*$  be defined as
$ G(x,y) = G_{n,m}^*\left(\phi_n^{*-1}(x),\psi_m^{*-1}(y),G(\phi_n^{*-1}(x),\psi_m^{*-1}(y))\right)$ for
$x \in [x_{n-1}^*,x_n^*]$ and $y \in [y_{m-1}^*,y_m^*]$.
By~\eqref{eq:cond1} and~\eqref{eq:inv}, we observe,
\begin{align*}
G_1\left(R(\phi_n(x),\psi_m(y))\right)&\mbox{}= F_{n,m}^1
\left(x,y,G \circ R (x,y) \right)
\\ & \mbox{}= \al_{n,m}\ (G_1 \circ R)(x,y) + \bt_{n,m}\ (G_2 \circ R)(x,y) \\ & \quad \mbox{}+
e_{n,m}\ x +f_{n,m}\ y +g_{n,m}\ xy + k_{n,m}\\
G_2\left(R(\phi_n(x),\psi_m(y)\right) &\mbox{}=F_{n,m} \left(x,y,G
\circ R (x,y) \right)
\\ &\mbox{}= \gm_{n,m}\ (G_2 \circ R)(x,y) + \tilde{e}_{n,m}\ x +\tilde{f}_{n,m}\ y +
\tilde{g}_{n,m}\ xy + \tilde{k}_{n,m}
\end{align*}
Thus, we have, \vspace{-0.5cm}
\begin{align*}
|G_1(R(\phi_n(x),& \psi_m(y)))-F_1(\phi_n(x),\psi_m(y))| \\
& \leq |\al_{n,m}|\  |G_1 \circ R(x,y)- F_1(x,y)| + |\bt_{n,m}| |G_2 \circ R(x,y)- F_2(x,y)| \\
& \leq |\al|\  |G_1 \circ R(x,y)- F_1\circ R(x,y)| + |\al|\  |F_1 \circ R(x,y)- F_1(x,y)| \\
& \quad \mbox{} + |\bt|\  |G_2 \circ R(x,y)- F_2(x,y)|
\end{align*}
Using Proposition~\ref{prop:st2} and Corollary~\ref{cor:st1}, it follows that there exist constants $\bar{M}$ and $\delta$
which are independent of $z_{n,m}$  and $t_{n,m}$  such that  $|F_1(X)-F_1(\bar{X})| \leq \bar{M}\  d_M(X,\bar{X})^{\delta}$
where,  $X=(x,y)\ \mbox{and}\ \bar{X}=( \bar{x},\bar{y})$. Since $(x,y) \in S $ implies $ (x,y) \in S_{n,m} $ for some
$n,m = 1,\ldots,N$, it is easily seen that $R(x,y) \in S_{n,m}^* $ which in turn gives
$d_M(X,R(X))\leq \max \{(|x_n-x_n^*| + |y_m-y_m^*|):n,m = 0,1,\ldots,N \}.$
Thus, the last inequality reduces to \vspace{-0.5cm}
\begin{align*}
\|G_1 \circ R - F_1 \|_{\infty} & \leq |\al|\  \|G_1-F_1\|_{\infty}+ |\bt|\ \|G_2 \circ R - F_2\|_{\infty} \nno \\
& \quad \mbox{}+ |\al|\  \bar{M} \max \left\{ \left(|x_n-x_n^*| + |y_m-y_m^*|\right)^{\delta}: n,m=0,1,\ldots,N \right\}
\end{align*}
Now, $\|G_1  - F_1 \|_{\infty}  \leq \|G_1  - F_1 \circ R^{-1}\|_{\infty}+ \|F_1 \circ R^{-1} - F_1 \|_{\infty}. $
Therefore,\vspace{-0.5cm}
\begin{align}\label{eq:th1tempbnd}
\|G_1  - F_1 \|_{\infty}
& \leq \frac{\bt}{(1-\al)}\ \|G_2 \circ R - F_2\|_{\infty}   \nno \\
& \quad \mbox{}+ \frac{(1+\al)}{(1-\al)}\ \bar{M}
\max \left \{\left(|x_n-x_n^*| + |y_m-y_m^*|\right)^{\delta}:n,m =0,1,\ldots,N  \right \}
\end{align}
In order to find an upper bound of $\|G_2 \circ R - F_2\|_{\infty}$ in~\eqref{eq:th1tempbnd} , we observe that, \vspace{-0.5cm}
\begin{align*}
|G_2 \circ R (\phi_n(x),\psi_m(y)) &- F_2(\phi_n(x),\psi_m(y))| \\ &
\leq |\gm|\  \bigg\{|G_2 \circ R(x,y)-F_2 \circ R(x,y)| + |F_2 \circ R(x,y) - F_2(x,y)|\bigg\}
\end{align*}
The above inequality holds for all $n,m = 1,2 \ldots N$. So, \vspace{-0.5cm}
\begin{align*}
\|G_2 \circ R - F_2\|_{\infty}  \leq |\gm|\  \|G_2-F_2\|_{\infty}
+ |\gm|\  \bar{M} \max \left \{ \left(|x_n-x_n^*| + |y_m-y_m^*|\right)^{\delta}: n,m = 0,\ldots,N \right \}
\end{align*}
Hence, \vspace{-0.5cm}
\begin{align*}
\|G_2 - F_2\|_{\infty}& \leq \bar{M} \frac{(1+\gm)}{(1-\gm)}\
\max \left \{\left(|x_n-x_n^*| + |y_m-y_m^*|\right)^{\delta}:n,m =0,\ldots,N \right \} \nno
\end{align*}
Therefore, \vspace{-0.5cm}
\begin{align} \label{eq:bnd2}
\|G_2 \circ R - F_2\|_{\infty}  \leq  \bar{M} \frac{(2 \gm)}{(1-\gm)}\
\max \left \{ \left(|x_n-x_n^*| + |y_m-y_m^*|\right)^{\delta}:n,m = 0,1,\ldots,N \right \}
\end{align}
Substituting  inequality~\eqref{eq:bnd2} in  inequality~\eqref{eq:th1tempbnd}, we get the required bounds.   \qed
\end{proof}

The following theorem gives stability of CHFIS when there is a perturbation in the dependent variable.

\begin{theorem}\label{th:stth2}
Let $F_1$ and $G_1,$ with the same choice of free variables and constrained variable, be CHFIS respectively
for the generalized interpolation data $\triangle= \{(x_i,y_j,z_{i,j},t_{i,j}) :i,j=0,1,\ldots,N \} $ and
$\triangle^*=\{(x_i,y_j,z_{i,j}^*,t_{i,j}) :i,j=0,1,\ldots,N \}$. Then,
\begin{align}\label{eq:st2}
\left\|F_1-G_1\right\|_{\infty} \leq \frac{4 (1+\al)}{(1-\al)}\ \max \bigg\{|z_{n,m}-z_{n,m}^*|: n,m = 0,1,\ldots,N \bigg\}
\end{align}
\end{theorem}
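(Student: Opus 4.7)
The plan is to follow the contraction-type argument of Theorem~\ref{th:stth1} but in a much simpler setting: since the independent variables $x_i,y_j$ and the hidden values $t_{i,j}$ are identical for $\triangle$ and $\triangle^*$, no rescaling map $R$ is needed, and, crucially, the second components of the two CHFIS coincide. This last observation is what is responsible for the constant in~\eqref{eq:st2} depending only on $\al$ and not on $\bt$ or $\gm$.

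To justify that claim, I would observe that the coefficients $\gm_{n,m},\tilde{e}_{n,m},\tilde{f}_{n,m},\tilde{g}_{n,m},\tilde{k}_{n,m}$ defined in~\eqref{eq:coef} depend only on $\{x_i\},\{y_j\},\{t_{i,j}\}$ and $\gm_{n,m}$, all of which are common to $\triangle$ and $\triangle^*$. Hence $q_{n,m}\equiv q_{n,m}^*$, and the fixed-point relation
\ben
F_2(\phi_n(x),\psi_m(y)) = \gm_{n,m}\,F_2(x,y) + q_{n,m}(x,y)
\een
for the second component involves no coupling to $F_1$. Uniqueness of the IFS attractor then forces $F_2\equiv G_2$ on $S$.

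With $F_2=G_2$ in hand, subtracting the defining equations for $F_1$ and $G_1$ on the block $\phi_n(I)\times\psi_m(J)$ cancels the $\bt_{n,m}$-term and yields
\ben
\bigl|F_1-G_1\bigr|(\phi_n(x),\psi_m(y)) \leq |\al_{n,m}|\,\bigl|F_1-G_1\bigr|(x,y) + \bigl|p_{n,m}(x,y) - p_{n,m}^*(x,y)\bigr|.
\een
Setting $\al=\max_{n,m}|\al_{n,m}|$ and taking supremum over $(x,y)\in S$ and over $(n,m)$ produces the self-referential estimate $\|F_1-G_1\|_\infty \leq \al\,\|F_1-G_1\|_\infty + \max_{n,m}\|p_{n,m}-p_{n,m}^*\|_\infty$.

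The key remaining step is to bound $\|p_{n,m}-p_{n,m}^*\|_\infty$ by $\max_{i,j}|z_{i,j}-z_{i,j}^*|$. Since $t_{i,j}\equiv t_{i,j}^*$, the $\bt_{n,m}$-contributions in each of the differences $e_{n,m}-e_{n,m}^*$, $f_{n,m}-f_{n,m}^*$, $g_{n,m}-g_{n,m}^*$, $k_{n,m}-k_{n,m}^*$ cancel, leaving combinations of a few $z_{i,j}-z_{i,j}^*$ terms with weights $1$ or $\al_{n,m}$. Applying the triangle inequality to the four summands of $p_{n,m}-p_{n,m}^*=(e-e^*)x+(f-f^*)y+(g-g^*)xy+(k-k^*)$ and bounding each by $(1+\al)\max|z-z^*|$ yields $\|p_{n,m}-p_{n,m}^*\|_\infty \leq 4(1+\al)\max|z-z^*|$, which accounts for the factor of~$4$. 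Substituting this back and rearranging gives~\eqref{eq:st2}. The main obstacle I anticipate is not the contraction step itself but the \emph{careful bookkeeping} in~\eqref{eq:coef} needed to verify that each of the four coefficient differences contributes at most $(1+\al)\max|z-z^*|$ after the $\bt_{n,m}$-terms and cross-cancellations with $g_{n,m}-g_{n,m}^*$ are properly accounted for.
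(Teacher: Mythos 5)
Your overall architecture is exactly the paper's: establish $F_2\equiv G_2$ from the decoupled second-component equations, cancel the $\bt$-term, run the contraction in $\al$, and reduce everything to a bound on $\|p_{n,m}-p_{n,m}^*\|_\infty$. However, the step where you extract the constant $4(1+\al)$ contains a genuine gap, in two respects. First, your opening claim that ``no rescaling map $R$ is needed'' is not right: the paper's first move is to invoke Proposition~\ref{prop:st1} to normalize the domain to $[0,\tfrac{1}{2}]\times[0,\tfrac{1}{2}]$. Without this normalization the coefficient differences $e_{n,m}-e_{n,m}^*$, $f_{n,m}-f_{n,m}^*$, $g_{n,m}-g_{n,m}^*$ carry factors $(x_0-x_N)^{-1}$ and $(y_0-y_M)^{-1}$ from~\eqref{eq:coef}, while the monomials $x$, $y$, $xy$ range over a rectangle whose coordinates need not be comparable to those lengths (take $x_0=100$, $x_N=101$); no universal constant $4$ comes out of the ``weights $1$ or $\al_{n,m}$'' picture you describe.

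Second, even after normalizing, bounding the four summands $(e-e^*)x$, $(f-f^*)y$, $(g-g^*)xy$, $(k-k^*)$ separately by $(1+\al)\max|z_{i,j}-z_{i,j}^*|$ fails. Writing $\De z_{i,j}=z_{i,j}-z_{i,j}^*$ and working on $[0,\tfrac12]^2$, one reads off from~\eqref{eq:coef} that $|e_{n,m}-e_{n,m}^*|$ can be as large as $4(1+\al)\max|\De z|$, so $|(e-e^*)x|$ can reach $2(1+\al)\max|\De z|$, and $|g_{n,m}-g_{n,m}^*|$ can be as large as $16(1+\al)\max|\De z|$, so $|(g-g^*)xy|$ can reach $4(1+\al)\max|\De z|$ on its own; a term-by-term triangle inequality over the monomial basis $\{x,y,xy,1\}$ therefore yields a constant well above $4$. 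The paper obtains $4$ by regrouping: re-centering the polynomial at the corner $(x_N,y_M)=(\tfrac12,\tfrac12)$ (equivalently, substituting the expression for $k_{n,m}$ back into $p_{n,m}$) gives
\ben
|p_{n,m}(x,y)-p^*_{n,m}(x,y)|\le (1+\al)\Bigl[2\bigl|x-\tfrac12\bigr|+2\bigl|y-\tfrac12\bigr|+4\bigl|xy-\tfrac14\bigr|+1\Bigr]\max|\De z|,
\een
and the bracket is at most $4$ on $[0,\tfrac12]^2$. So the bookkeeping you flagged as the main obstacle is indeed where the proof lives, and the particular way you propose to carry it out does not close the argument.
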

\begin{proof}
In view of Proposition~\ref{prop:st1}, we may assume $[x_0,x_N]\times [y_0,y_N] = [0,\frac{1}{2}] \times [0,\frac{1}{2}]$.
Since the independent variables and hidden variable are same in both the interpolation data, the self affine FIS are the same
i.e $F_2(x,y)=G_2(x,y)$. The value of $e_{n,m},\  f_{n,m},\  g_{n,m},\  k_{n,m}$ differs from
$e_{n,m}^*,\  f_{n,m}^*,\  g_{n,m}^*,\  k_{n,m}^*$ as the perturbation  occurs in dependent variable.

Therefore, for $n,m = 1,2,\ldots,N,$ \vspace{-0.5cm}
\begin{align*}
|F_1&(\phi_n(x) ,\psi_m(y))  - G_1(\phi_n(x),\psi_m(y))| \\
& \leq |\al|\  |F_1(x,y)-G_1(x,y)|\\ &  \mbox{}+ (1 + \al)\left[2|x-\frac{1}{2}|+ 2|y-\frac{1}{2}|+4|xy-\frac{1}{4}| + 1 \right]
\max \bigg\{|z_{n,m}-z_{n,m}^*|: n,m = 0,1,\ldots,N \bigg\}
\end{align*}
Since $\left[2|x-\frac{1}{2}|+ 2|y-\frac{1}{2}|+4|xy-\frac{1}{4}| + 1 \right] \leq 4$, the required bounds for stability is obtained
from the above inequality.\qed
\end{proof}

When the hidden variable is perturbed, both the self affine function $F_2$ and CHFIS $F_1$  gets perturbed.
Proposition~\ref{prop:st3} describes the stability of self affine function $F_2$ when the hidden variable is perturbed.
Using this proposition, the stability of CHFIS is described in Theorem \eqref{th:stth3} when the hidden variable is perturbed.

\begin{proposition}\label{prop:st3}
Let $F_1$ and $G_1,$ with the same choice of free variables and constrained variable, be CHFIS respectively
for the generalized interpolation data $\triangle= \{(x_i,y_j,z_{i,j},t_{i,j}) :i,j=0,1,\ldots,N \} $ and
$\triangle^*=\{(x_i,y_j,z_{i,j},t_{i,j}^*) :i,j=0,1,\ldots,N \}$. Then,
\begin{align}\label{eq:bndF2}
\left\| F_2-G_2 \right\|_{\infty} \leq \frac{4 (1+\gm)}{(1-\gm)} \max \{|t_{n,m}-t_{n,m}^*| : n = 0,1,\ldots,N; m = 0,1,\ldots,N \}
\end{align}
\end{proposition}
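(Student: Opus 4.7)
My plan is to mirror the argument used in the proof of Theorem~\ref{th:stth2}, but applied to the second component $F_2$ (and $G_2$) and with $\gamma$ playing the role that $\alpha$ plays there. The crucial structural observation is that $F_2$ satisfies the decoupled functional equation
$F_2(\phi_n(x),\psi_m(y)) = \gamma_{n,m}\, F_2(x,y) + q_{n,m}(x,y)$, with $q_{n,m}$ depending only on the coefficients $\tilde e_{n,m}, \tilde f_{n,m}, \tilde g_{n,m}, \tilde k_{n,m}$ from \eqref{eq:coef}. These coefficients are determined solely by the hidden variables $t_{i,j}$ and the free variable $\gamma_{n,m}$; they do \emph{not} involve $z_{i,j}$, $\alpha_{n,m}$ or $\beta_{n,m}$. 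Hence the analysis of $F_2 - G_2$ never has to see $F_1 - G_1$, and the bound will depend only on $\gamma$ and on $\max|t_{n,m} - t_{n,m}^*|$.

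First, I would invoke Proposition~\ref{prop:st1} to reduce without loss of generality to the case $[x_0,x_N]\times[y_0,y_M] = [0,\tfrac12]\times[0,\tfrac12]$; the perturbation is only in the $t$-coordinate, so the invariance of ratio condition \eqref{eq:cond1} is satisfied trivially with $x_i = x_i^*$, $y_j = y_j^*$. Next, subtracting the two functional equations gives, for all $n,m$,
\begin{align*}
|F_2(\phi_n(x),\psi_m(y)) - G_2(\phi_n(x),\psi_m(y))| \leq |\gamma|\,|F_2(x,y) - G_2(x,y)| + |q_{n,m}(x,y) - q_{n,m}^*(x,y)|.
\end{align*}
The heart of the proof is to estimate $|q_{n,m}(x,y) - q_{n,m}^*(x,y)|$. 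Writing out each coefficient difference from \eqref{eq:coef}, every term in $\tilde g_{n,m} - \tilde g_{n,m}^*$, $\tilde e_{n,m} - \tilde e_{n,m}^*$, $\tilde f_{n,m} - \tilde f_{n,m}^*$, $\tilde k_{n,m} - \tilde k_{n,m}^*$ is a linear combination of the quantities $t_{i,j} - t_{i,j}^*$ whose coefficients multiply to $(1+\gamma)$ after using $|\gamma_{n,m}| < 1$. Grouping the $xy$, $x$, $y$ and constant pieces gives a bound of the form
\begin{align*}
|q_{n,m}(x,y) - q_{n,m}^*(x,y)| \leq (1+\gamma)\Bigl[2\bigl|x-\tfrac12\bigr| + 2\bigl|y-\tfrac12\bigr| + 4\bigl|xy-\tfrac14\bigr| + 1\Bigr]\, T^{*},
\end{align*}
where $T^{*} = \max\{|t_{n,m}-t_{n,m}^*|: n,m=0,1,\ldots,N\}$.

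Since the bracket is bounded by $4$ on $[0,\tfrac12]^2$ (as already noted in the proof of Theorem~\ref{th:stth2}), I would then take the supremum over $(x,y) \in S$ and over all $n,m$ on both sides of the recursive inequality to obtain
\begin{align*}
\|F_2 - G_2\|_{\infty} \leq |\gamma|\,\|F_2 - G_2\|_{\infty} + 4(1+\gamma)\,T^{*},
\end{align*}
and solve for $\|F_2 - G_2\|_\infty$, which yields the claimed bound \eqref{eq:bndF2}. I expect the only nontrivial step to be the coefficient-difference bookkeeping in deriving the $(1+\gamma)[2|x-\tfrac12|+2|y-\tfrac12|+4|xy-\tfrac14|+1]$ estimate; however this computation is parallel to the analogous one implicit in Theorem~\ref{th:stth2}, and in fact is simpler because only $\gamma$ (and not also $\beta$) enters the formulas for the tilde-coefficients, so no cross-term coupling between $F_1$ and $F_2$ appears.
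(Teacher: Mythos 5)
Your proposal follows essentially the same route as the paper's proof: reduce to $[0,\tfrac12]\times[0,\tfrac12]$ via Proposition~\ref{prop:st1}, subtract the decoupled functional equations for $F_2$ and $G_2$ to get the recursive inequality with the factor $(1+\gm)\bigl[2|x-\tfrac12|+2|y-\tfrac12|+4|xy-\tfrac14|+1\bigr]$, bound the bracket by $4$, and solve for $\|F_2-G_2\|_{\infty}$. Your added remarks on why the tilde-coefficients involve only the $t_{i,j}$ and $\gm$ (so no coupling to $F_1$) simply make explicit what the paper leaves implicit.
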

\begin{proof}
By Proposition~\ref{prop:st1}, we may assume $[x_0,x_N] \times [y_0,y_N] = [0,\frac{1}{2}] \times [0,\frac{1}{2}]$.
Hence, \vspace{-0.3cm}
\begin{align*}
|F_2(\phi_n(x),\psi_m(y))& - G_2(\phi_n(x),\psi_m(y))| \\
& \leq |\gm|\ |F_2(x,y)-G_2(x,y)|  + \left[2|x-\frac{1}{2}|+ 2|y-\frac{1}{2}|+4|xy-\frac{1}{4}| + 1 \right] \times \\
& \quad \mbox{} \times (1+\gm) \left\{ \max \{|t_{n,m}-t_{n,m}^*| : n,m = 0,1,\ldots,N \} \right\}
\end{align*}
The above inequality is true for all $\phi_n(x)$ and $\psi_m(y)$;
$n,m = 1,2,\ldots,N$  giving the required bounds for  $\left\|
F_2-G_2 \right\|_{\infty}$. \qed
\end{proof}

\begin{theorem}\label{th:stth3}
Let $F_1$ and $G_1,$ with the same choice of free variables and constrained variable, be CHFIS respectively
for the generalized interpolation data $\triangle= \{(x_i,y_j,z_{i,j},t_{i,j}) :i,j=0,1,\ldots,N \} $ and
$\triangle^*=\{(x_i,y_j,z_{i,j},t_{i,j}^*) :i,j=0,1,\ldots,N \}$. Then,
\begin{align}
\left\|F_1-G_1\right\|_{\infty} \leq \frac{8\ \bt}{(1-\al)(1-\gm)} \max \bigg\{|t_{n,m}-t_{n,m}^*|: n,m = 0,1,\ldots,N \bigg\}
\end{align}
\end{theorem}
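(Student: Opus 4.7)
The plan is to reduce to the unit square, control the perturbation of the coefficient polynomial $p_{n,m}$, and then close a fixed-point style inequality in the sup norm using Proposition~\ref{prop:st3} to handle the $F_2-G_2$ term that appears coupled to $F_1-G_1$ through the constrained variable $\bt$.

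First, by Proposition~\ref{prop:st1} and the invariance of the metric structure under the homeomorphism~$R$, I may assume without loss of generality that $[x_0,x_N]\times[y_0,y_M]=[0,\tfrac12]\times[0,\tfrac12]$, exactly as in Theorems~\ref{th:stth2} and~\ref{prop:st3}. Since both $\triangle$ and $\triangle^*$ share the same $x_i$, $y_j$, $z_{i,j}$ and the same free/constrained variables, the formulas~\eqref{eq:coef} show that the only coefficients that change are those involving $t_{eva}$ or boundary $t$-values in the \emph{first} component of $F_{n,m}$; namely $e_{n,m},f_{n,m},g_{n,m},k_{n,m}$ differ from their starred counterparts only through terms of the form $\bt_{n,m}(t_{i,j}-t_{i,j}^*)$. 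Collecting these and using the same affine-polynomial estimate that yielded the bracket $[2|x-\tfrac12|+2|y-\tfrac12|+4|xy-\tfrac14|+1]\le 4$ in the proof of Theorem~\ref{th:stth2}, I obtain
\begin{align*}
|p_{n,m}(x,y)-p_{n,m}^*(x,y)|\le 4\,|\bt|\,\max\bigl\{|t_{n,m}-t_{n,m}^*|:n,m=0,1,\ldots,N\bigr\}.
\end{align*}

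Next, using the recursion $F_1(\phi_n(x),\psi_m(y))=\al_{n,m}F_1(x,y)+\bt_{n,m}F_2(x,y)+p_{n,m}(x,y)$ and the analogous one for $G_1$, I subtract to get
\begin{align*}
|F_1(\phi_n(x),\psi_m(y))-G_1(\phi_n(x),\psi_m(y))|
\le |\al|\,|F_1-G_1|(x,y)+|\bt|\,|F_2-G_2|(x,y)+|p_{n,m}-p_{n,m}^*|(x,y).
\end{align*}
Since the images of $\phi_n\times\psi_m$ tile the whole square, taking the supremum over $(x,y)$ on both sides yields
\begin{align*}
\|F_1-G_1\|_{\infty}\le |\al|\,\|F_1-G_1\|_{\infty}+|\bt|\,\|F_2-G_2\|_{\infty}+4|\bt|\max\bigl\{|t_{n,m}-t_{n,m}^*|\bigr\},
\end{align*}
whence $\|F_1-G_1\|_{\infty}\le \frac{1}{1-\al}\bigl(\bt\,\|F_2-G_2\|_{\infty}+4\bt\max|t_{n,m}-t_{n,m}^*|\bigr)$.

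Finally, I substitute the bound $\|F_2-G_2\|_{\infty}\le \frac{4(1+\gm)}{1-\gm}\max|t_{n,m}-t_{n,m}^*|$ from Proposition~\ref{prop:st3}. A short algebraic simplification $\frac{4(1+\gm)}{1-\gm}+4=\frac{8}{1-\gm}$ collapses the two contributions and produces the claimed constant $\frac{8\bt}{(1-\al)(1-\gm)}$. The only non-routine step is the first one, namely verifying that the term $|p_{n,m}-p_{n,m}^*|$ is in fact bounded by $4|\bt|\max|t_{n,m}-t_{n,m}^*|$ with the clean constant $4$; this requires carefully tracking how $\bt_{n,m}(t_{eva}-t_{eva}^*)$ propagates through $g_{n,m},e_{n,m},f_{n,m},k_{n,m}$ and then invoking the same $[\,\cdot\,]\le 4$ arithmetic used in Theorem~\ref{th:stth2}. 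All other steps are the Lipschitz/contraction bookkeeping already employed in the previous theorems.
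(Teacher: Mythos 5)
Your proposal is correct and follows essentially the same route as the paper: reduce to $[0,\tfrac12]\times[0,\tfrac12]$ via Proposition~\ref{prop:st1}, bound the perturbation of $p_{n,m}$ by $4|\bt|\max|t_{n,m}-t_{n,m}^*|$ using the bracket estimate $[2|x-\tfrac12|+2|y-\tfrac12|+4|xy-\tfrac14|+1]\le 4$, close the fixed-point inequality in the sup norm, and insert the bound on $\|F_2-G_2\|_{\infty}$ from Proposition~\ref{prop:st3} with the simplification $\frac{4(1+\gm)}{1-\gm}+4=\frac{8}{1-\gm}$. The only difference is cosmetic: you make the coefficient-tracking step explicit as a separate estimate on $|p_{n,m}-p_{n,m}^*|$, whereas the paper folds it directly into one displayed inequality.
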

\begin{proof}
We may assume $[x_0,x_N]\times [y_0,y_N] = [0,\frac{1}{2}] \times [0,\frac{1}{2}]$ by Proposition~\ref{prop:st1}.
Thus, \vspace{-0.3cm}
\begin{align*}
|F_1&(\phi_n(x),\psi_m(y))  - G_1(\phi_n(x),\psi_m(y))| \\
& \leq  |\al| |F_1(x,y)-G_1(x,y)| + |\bt| |F_2(x,y)-G_2(x,y)| \\
& \quad \mbox{} +  |\bt| \left[2|x-\frac{1}{2}|+ 2|y-\frac{1}{2}|+4|xy-\frac{1}{4}| + 1 \right]
\max \{|t_{n,m}-t_{n,m}^*|: n,m = 0,1,\ldots,N \}
\end{align*}

Using equation(\ref{eq:bndF2}) and since $\left[2|x-\frac{1}{2}|+ 2|y-\frac{1}{2}|+4|xy-\frac{1}{4}| + 1 \right] \leq 4$,
we get the desired bounds in \eqref{th:stth3}. \qed
\end{proof}
Theorems (\ref{th:stth1})-(\ref{th:stth3}) suggest the following definition of a metric on a generalized interpolation data
needed to formulate our main result on stability for CHFIS.
\begin{definition}
Let $S(\triangle)$ be the set of generalized interpolation data. The metric
$d(\triangle^1, \triangle^2)$  on the set $S(\triangle) \subset \Re^4$ is defined as: \vspace{-0.2cm}
\begin{align}\label{eq:metric}
d(\triangle^1, \triangle^2)  & = \frac{8\ \bt}{(1-\al)\ (1-\gm)} \max \{|t_{n,m}^1-t_{n,m}^2|: n,m = 0,1,\ldots,N \} \nno \\
& \quad \mbox{} + \frac{4 (1+\al)}{(1-\al)} \max \{|z_{n,m}^1-z_{n,m}^2|: n,m = 0,1,\ldots,N \} \nno \\
& \quad \mbox{} + \left[\frac{2\ \bt\ \gm}{(1-\al)(1-\gm)} + \frac{(1+\al)}{(1-\al)}\right] \bar{M} \times \nno \\
& \quad \mbox{} \times \max \{\left(|x_n^1-x_n^2| + |y_m^1-y_m^2|\right)^{\delta} : n,m = 0,1,\ldots,N\}
\end{align}
where $\triangle^m = \{(x_i^m,y_j^m,z_{i,j}^m,t_{i,j}^m),:i,j=0,1,\ldots,N  \} \in S(\triangle),
\ m=1,2$  and  satisfy invariance of ratio condition~\eqref{eq:cond1}.
\end{definition}
Using Theorems \eqref{th:stth1}-\eqref{th:stth3}, the main stability result for CHFIS is now obtained as follows:
\begin{theorem}\label{th:stmain}
Let $F_1$ and $G_1,$ with the same choice of free variables and
constrained variable, be CHFIS respectively for the generalized
interpolation data $\triangle= \{(x_i,y_j,z_{i,j},t_{i,j})
:i,j=0,1,\ldots,N \}\ $ and
$\triangle^*=\{(x_i^*,y_j^*,z_{i,j}^*,t_{i,j}^*) :i,j=0,1,\ldots,N
\}$ that satisfy the invariance of ratio condition~\eqref{eq:cond1}.
Then,
\begin{align*}
\|F_1-G_1\|_{\infty} \leq d\left(\triangle,\triangle^*\right).
\end{align*}
\end{theorem}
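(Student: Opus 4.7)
The plan is to prove Theorem~\ref{th:stmain} by a standard triangle-inequality bridging argument that reduces the fully general perturbation to the three special perturbations already analyzed. Concretely, I introduce two intermediate generalized interpolation data sets
\begin{align*}
\triangle^{(1)} &= \{(x_i^*, y_j^*, z_{i,j}, t_{i,j}) : i,j=0,1,\ldots,N\}, \\
\triangle^{(2)} &= \{(x_i^*, y_j^*, z_{i,j}^*, t_{i,j}) : i,j=0,1,\ldots,N\},
\end{align*}
so that $\triangle^{(1)}$ differs from $\triangle$ only in the independent variables, $\triangle^{(2)}$ differs from $\triangle^{(1)}$ only in the dependent variable, and $\triangle^*$ differs from $\triangle^{(2)}$ only in the hidden variable. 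Let $H_1^{(1)}$ and $H_1^{(2)}$ denote the corresponding CHFIS built with the same free and constrained variables as $F_1$ and $G_1$.

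By the ordinary triangle inequality in $\|\cdot\|_\infty$,
\begin{align*}
\|F_1 - G_1\|_\infty \leq \|F_1 - H_1^{(1)}\|_\infty + \|H_1^{(1)} - H_1^{(2)}\|_\infty + \|H_1^{(2)} - G_1\|_\infty.
\end{align*}
I then estimate the three summands one by one. For the first, $\triangle$ and $\triangle^{(1)}$ have identical $z$-- and $t$--values and the same pairs $(x_i, y_j)$ vs.\ $(x_i^*, y_j^*)$ as in the hypothesis of Theorem~\ref{th:stmain}, so the invariance of ratio condition~\eqref{eq:cond1} transfers verbatim; applying Theorem~\ref{th:stth1} yields exactly the third summand of $d(\triangle,\triangle^*)$. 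For the second, $\triangle^{(1)}$ and $\triangle^{(2)}$ share the same independent and hidden variables, so Theorem~\ref{th:stth2} (invoked after using Proposition~\ref{prop:st1} to normalize the common domain) gives the second summand. For the third, $\triangle^{(2)}$ and $\triangle^*$ share independent and dependent variables, so Theorem~\ref{th:stth3} delivers the first summand.

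Summing the three estimates reproduces the three contributions to $d(\triangle,\triangle^*)$ defined in~\eqref{eq:metric} and completes the proof. The only point requiring a little care, and the closest thing to an obstacle, is checking that each intermediate data set legitimately meets the hypotheses of the theorem invoked on it: in particular that the invariance of ratio condition between $\triangle$ and $\triangle^{(1)}$ is inherited from the assumption on $(\triangle,\triangle^*)$, and that the free/constrained variables can be fixed throughout so the constants $\alpha,\beta,\gamma$ appearing in the three bounds are the same ones that appear in the definition of $d(\triangle^1,\triangle^2)$. Once this bookkeeping is done, no further estimation is needed.
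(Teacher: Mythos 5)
Your proposal is correct and follows essentially the same argument as the paper: a triangle inequality through two intermediate data sets, each differing from its neighbour in only one type of variable, with Theorems~\ref{th:stth1}--\ref{th:stth3} applied to the three pairs. The only difference is that you perturb the independent variables first and the hidden variable last, whereas the paper does the reverse; this ordering is immaterial since each of the three bounds is insensitive to the values of the variables already perturbed.
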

\begin{proof}
 Let $\triangle^*_1 = \{(x_i,y_j,z_{i,j},t_{i,j}^*) :i,j=0,1,\ldots,N  \} $ and
 $\triangle^*_2 = \{(x_i,y_j,z_{i,j}^*,t_{i,j}^*) :i,j=0,1,\ldots,N \}$
 be two generalized interpolation data having CHFIS $F_1^*$ and $F_1^{**}$ respectively with the
same choice of free variables and constrained variable as for $F_1$ and $G_1$. By considering the pairs
$\left(\triangle,\triangle^*_1\right), \ \left(\triangle^*_1,\triangle^*_2\right)$  and $\left(\triangle^*_2,\triangle^*\right)$
and applying Theorems (\ref{th:stth1})-(\ref{th:stth3}) for these sets of data with appropriate CHFIS,
it follows that,\vspace{-0.5cm}
\begin{align*}
\|F_1-G_1\|_{\infty} \leq \|F_1-F_1^{*}\|_{\infty} + \|F_1^{*}-F_1^{**}\|_{\infty} + \|F_1^{**}-G_1\|_{\infty}
\leq d\left(\triangle,\triangle^* \right). \hspace{2cm} \qed
\end{align*}
\end{proof}

\section{Error bounds for a Sample Surface}\label{sec:five}
Consider the sample CHFIS (\textit{c.f. Fig.~\ref{fig:chfisorig}}) generated by the data in rows $1-3$ of Table~\ref{tab:sttable1}
with $\al_{n,m}=0.7$,  $\bt_{n,m}=0.4$ and $\gm_{n,m}=0.5$. Perturbed values in independent variables $x_n$ and $y_m$ are chosen
such that the invariance of ratio condition~\eqref{eq:cond1} is satisfied.
 However, perturbed values of dependent variable $z_{n,m}$ and hidden variable $t_{n,m}$ are randomly generated.

Figs.~\ref{fig:chfisxy1} and~\ref{fig:chfisxy2} are simulations of CHFIS generated corresponding to perturbations
in independent variables $x_n$ and $y_m$ \big(\textit{c.f. Table~\ref{tab:sttable1} Case~(i(a)) and Case~(i(b))}\big).
The bound on error $\|F - G\|_{\infty}$ (\textit{c.f. Theorem~\ref{th:stth1}}) for these simulated surfaces are found
to be $0.0217$  and $2.1667$ when $\max \big\{\left(|x_n-x_n^*| + |y_m-y_m^*|\right) : n,m =0,1,2 \big \}$
is $0.002$ and $0.2$ respectively (\textit{c.f. Table~\ref{tab:sttable2}}) .

Figs.~\ref{fig:chfisz1} and~\ref{fig:chfisz2} give simulations of CHFIS obtained by perturbing the values of dependent variable
$z_{n,m}$ \big(\textit{c.f. Table~\ref{tab:sttable1} Case~(ii(a)) and Case~(ii(b))}\big). The bound on error $\|F - G\|_{\infty}$
(\textit{c.f. Theorem~\ref{th:stth2}}) for these simulations equals $0.0227$ and $2.2667$
when $\max \{|z_{n,m}^1-z_{n,m}^2|: n,m = 0,1,2 \}$ is $0.001$ and $0.1$ respectively
(\textit{c.f. Table~\ref{tab:sttable2}}).

Figs.~\ref{fig:chfist1} and~\ref{fig:chfist2} demonstrate the effect of perturbations in hidden variable $t_{n,m}$
\big(\textit{c.f. Table~\ref{tab:sttable1} Case~(iii(a)) and Case~(iii(b))}\big). The bound on error $\|F - G\|_{\infty}$
(\textit{c.f. Theorem~\ref{th:stth3}}) for these simulated surfaces  equals $0.0213$ and $2.1333$
when $\max \{|t_{n,m}^1-t_{n,m}^2|: n,m = 0,1,2 \}$ is $0.001$ and $0.1$ respectively
(\textit{c.f. Table~\ref{tab:sttable2}}).

Finally,  Fig.~\ref{fig:chfistr1} and~\ref{fig:chfistr2} give the perturbed images that is simulated by simultaneously  using
perturbed independent variable, perturbed dependent variable $z_{n,m}^*$ and perturbed hidden variable
\big(\textit{c.f. Table~\ref{tab:sttable1} Case~(i(a)),(ii(a))(iii(a)) and Case~(i(b)),(ii(b))(iii(b)) respectively }\big).
The computed bound on error $\|F - G\|_{\infty}$
(\textit{c.f. Theorem~\ref{th:stmain}}) for these perturbations in all the variables is found to be $0.0657$ and $6.5667$
when $\max \big\{\left(|x_n-x_n^*| + |y_m-y_m^*|\right) : n,m =0,1,2 \big \}$ is $0.002$ and $0.2,$
$\max \{|z_{n,m}^1-z_{n,m}^2|: n,m = 0,1,2 \} $ is $0.001$ and $0.1$
and $\max \{|t_{n,m}^1-t_{n,m}^2|: n,m = 0,1,2 \} $ is $0.001$ and $0.1$ respectively.


\renewcommand\arraystretch{2.3}
\begin{table}[!hbp]
\bc \caption{Data points and their perturbations for a sample surface} \vspace{0.2cm}
{ \tiny
\begin{tabular}{|c|c|c|c|c|c|c|c|c|c|c|}\hline
\multirow{2}{15 pt}{\begin{sideways}\textbf{\ Data Points \ }\end{sideways} }
&$(x_n,y_m)$ & (0,0) & (0,1) & (0,2) & (1,0) & (1,1) & (1,2) & (2,0) & (2,1) & (2,2) \cr  \cline{2-11}
&$z_{n,m}$   &  0.3  & 0.5      & 0.6     & 0.7      & 0.4         & 0.6        & 0.8     & 0.5        & 0.6   \cr  \cline{2-11}
&$t_{n,m}$   &  0.3  & 0.4      & 0.5     & 0.7      & 0.8         & 0.5        & 0.6     & 0.8        & 0.9   \cr  \cline{1-11}
\multirow{1}{15 pt}{\textbf{\ Case (i) }}
&(a): $(x_n^*,y_m^*)$ & (0.001,0.001) & (0.001,1) & (0.01,1.999) & (1,0.001) & (1,1) & (1,1.999)
& (1.999,0.001) & (1.999,1) & (1.999,1.999) \cr  \cline{2-11}
&(b): $(x_n^*,y_m^*)$ & (0.1,0.1) & (0.1,1) & (0.1,1.9) & (1,0.1) & (1,1) & (1,1.9)
& (1.9,0.1) & (1.9,1) & (1.9,1.9) \cr  \cline{1-11}
\multirow{1}{15 pt}{\textbf{\ Case (ii) }}
&(a):  $z_{n,m}^*$    & 0.301 & 0.501 & 0.601 & 0.699 & 0.401 & 0.599 & 0.801 & 0.501 & 0.601  \cr  \cline{2-11}
&(b):  $z_{n,m}^*$    & 0.4 & 0.4 & 0.7 & 0.6 & 0.3 & 0.7 & 0.9 & 0.4 & 0.5  \cr  \cline{1-11}
\multirow{1}{15 pt}{\textbf{\ Case (iii) }}
&(a):  $t_{n,m}^*$    & 0.299 & 0.401 & 0.499 & 0.701 & 0.801 & 0.501 & 0.601 & 0.801 & 0.9  \cr  \cline{2-11}
&(b):  $t_{n,m}^*$    & 0.4 & 0.5 & 0.4 & 0.6 & 0.9 & 0.4 & 0.5 & 0.9 & 0.8   \\ \hline
\end{tabular}}
\label{tab:sttable1}
\ec
\end{table}

\renewcommand\arraystretch{2}
\begin{table}[!hbp]
\bc \caption{Error due to perturbation}
\vspace{0.2cm}{\tiny
\begin{tabular}{|c|c|c|c|}\hline
\multicolumn{2}{|c|}{Perturbation}& Maximum Manhattan Metric & Error \\ \hline
\multirow{1}{30 pt}{\textbf{\ \ Case(i) }}
&(a): Perturbed $(x_n^*,y_m^*)$  & 0.002 &0.0217  \cr  \cline{2-4}
&(b): Perturbed $(x_n^*,y_m^*)$  & 0.2   &2.1667  \cr  \cline{1-4}
\multirow{1}{30 pt}{\textbf{\ Case(ii) }}
&(a): Perturbed $z_{n,m}^*$     & 0.001  &0.0227  \cr  \cline{2-4}
&(b): Perturbed $z_{n,m}^*$     & 0.1    &2.2667  \cr  \cline{1-4}
\multirow{1}{30 pt}{\textbf{\ Case(iii) }}
&(a): Perturbed $t_{n,m}^*$     & 0.001  &0.0213  \cr  \cline{2-4}
&(b): Perturbed $t_{n,m}^*$     & 0.1    &2.1333   \\ \hline
\end{tabular}}
\label{tab:sttable2}
\ec
\end{table}

\begin{figure}[!hbp]
\centering  {
\includegraphics[width=4cm]{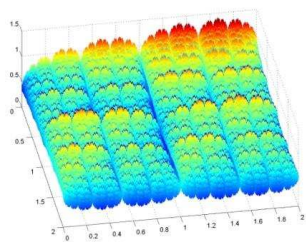}
}\caption{Original Surface \label{fig:chfisorig}}
\end{figure}

\begin{figure}[!hbp]\hspace{2.5cm}
\subfigure[{\scriptsize  Simulated Surface due to perturbation in
independent variables (\textit{c.f. Case~(i(a))})}] {
\includegraphics[width=4cm]{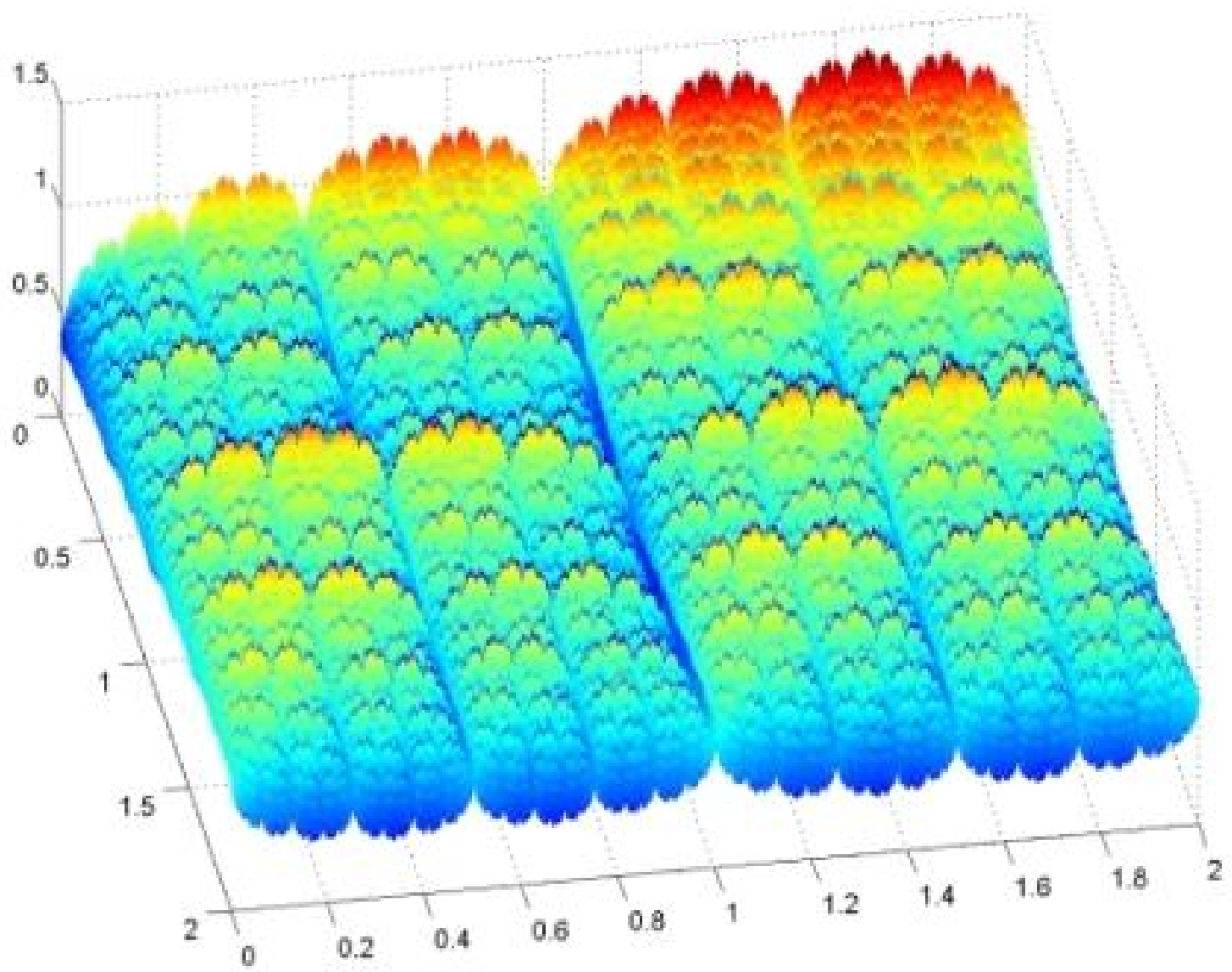}
\label{fig:chfisxy1} } \hspace{3cm} \subfigure [{\scriptsize
Simulated Surface due to perturbation in independent variables
(\textit{c.f. Case~(i(b))})}]
{\includegraphics[width=4cm]{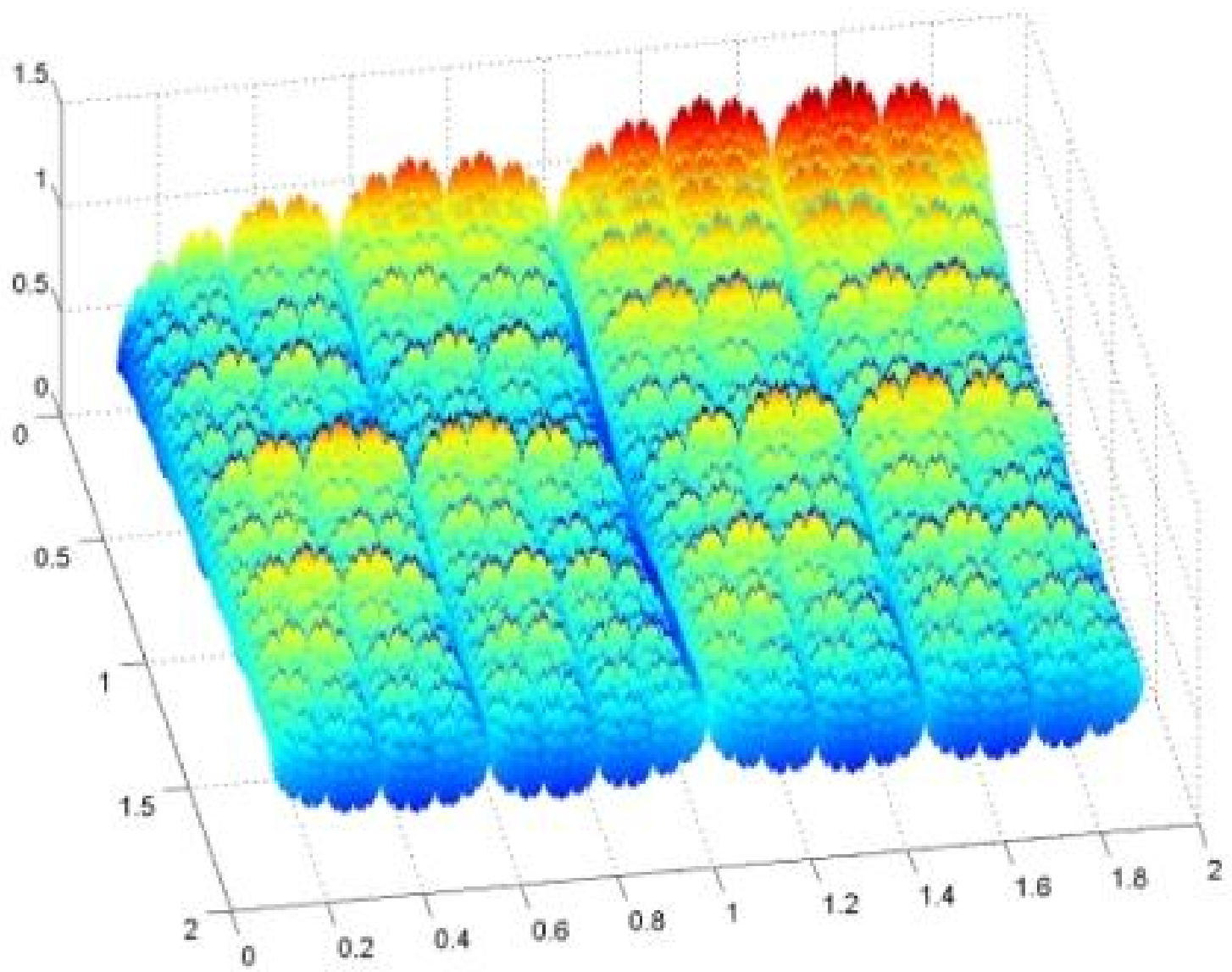} \label{fig:chfisxy2}}
\caption{Simulated Surfaces due to perturbation in independent
variables (\textit{c.f. Case~(i)}) }
\end{figure}

\begin{figure}[!hbp]\hspace{2.5cm}
\subfigure[{\scriptsize Simulated Surface due to perturbation in
dependent variable (\textit{c.f. Case~(ii(a))})}] {
\includegraphics[width=4cm]{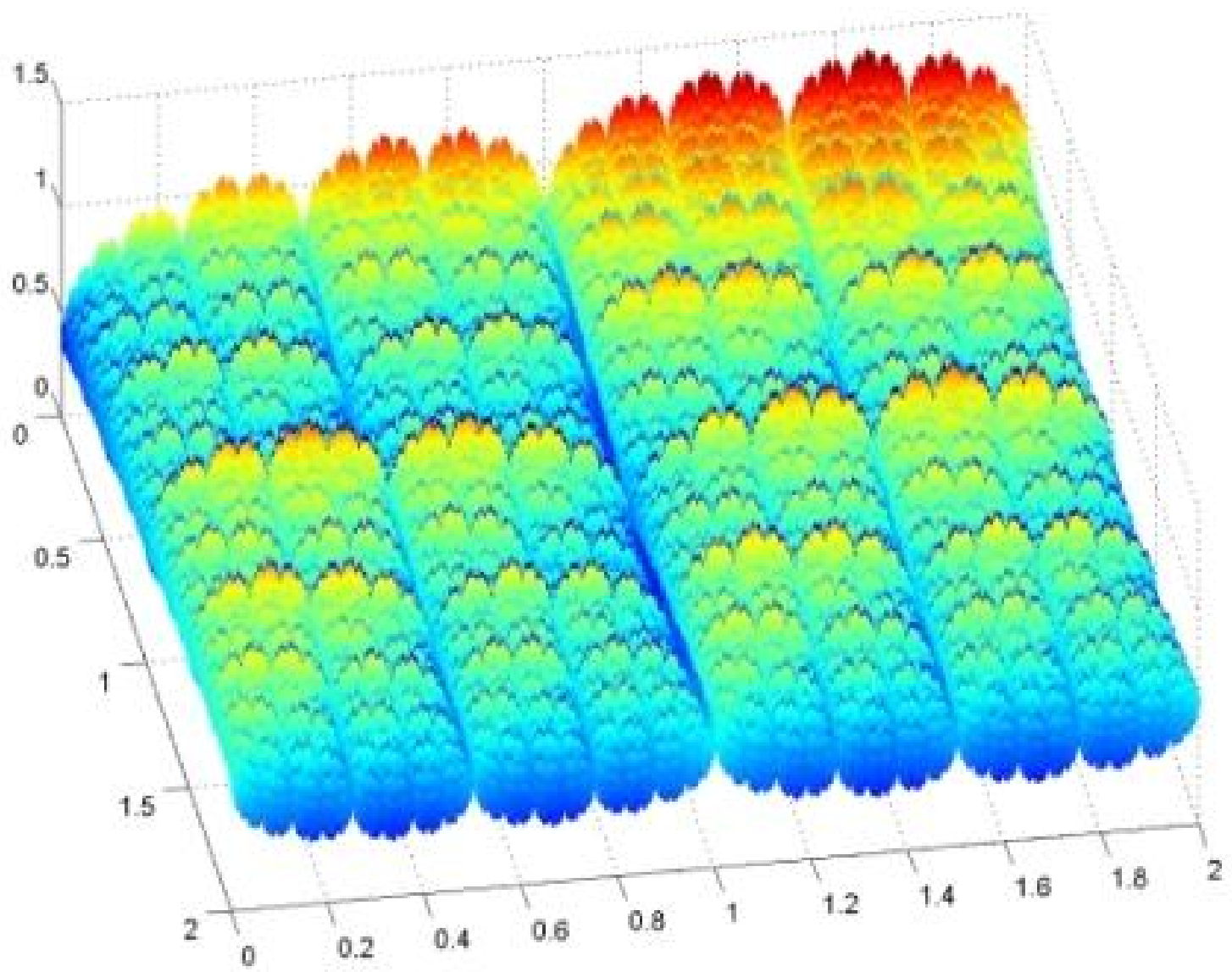}
\label{fig:chfisz1} } \hspace{3cm} \subfigure [{\scriptsize
Simulated Surface due to perturbation in dependent variable
(\textit{c.f. Case~(ii(b))})}] {
\includegraphics[width=4cm]{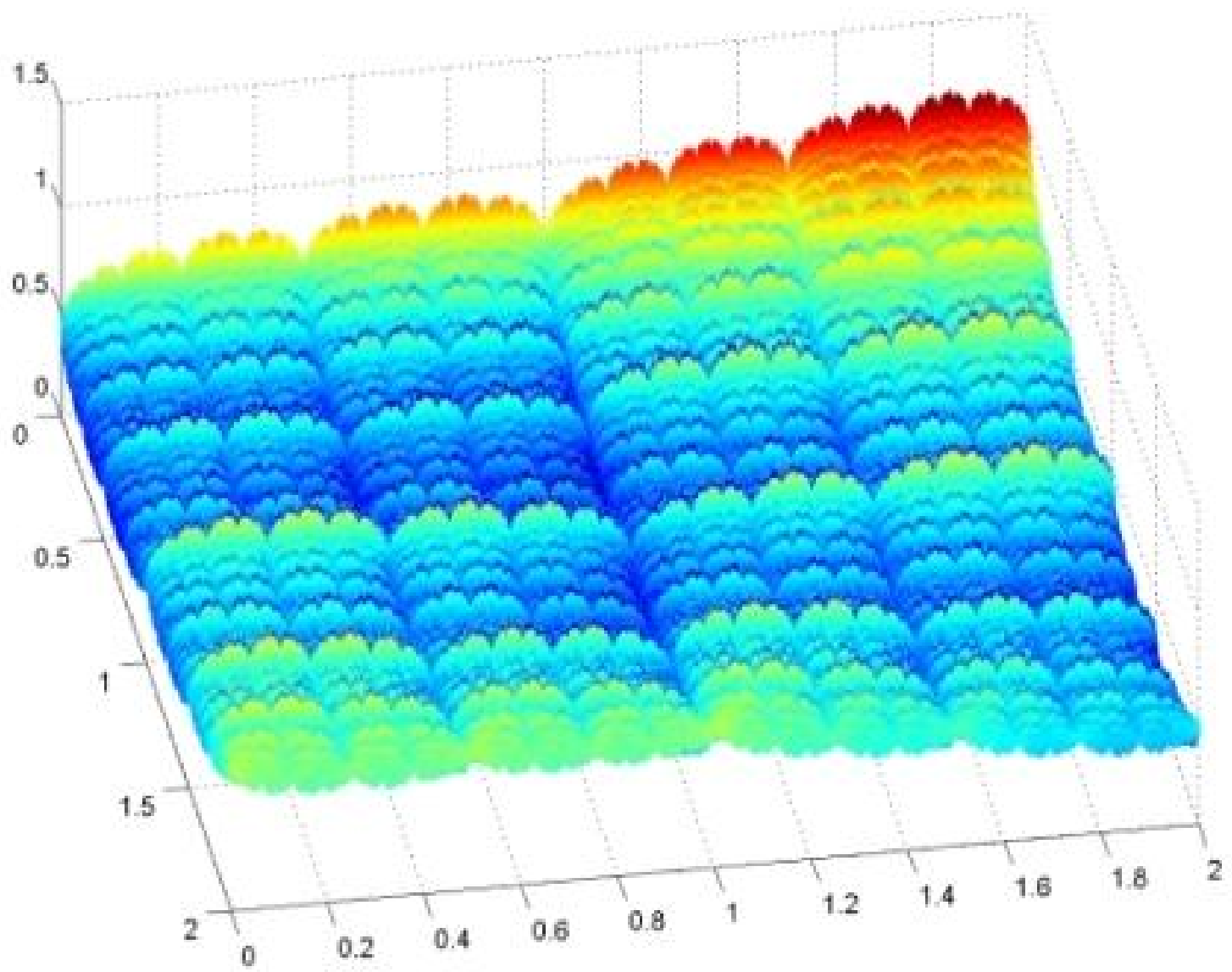}
\label{fig:chfisz2} } \caption{Simulated Surfaces due to
perturbation in dependent variable (\textit{c.f. Case~(ii)})}
\end{figure}

\begin{figure}[!hbp]\hspace{2.5cm}
\subfigure [{\scriptsize Simulated Surface due to perturbation in
hidden variable (\textit{c.f. Case~(iii(a))})}] {
\includegraphics[width=4cm]{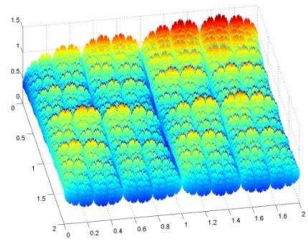}
\label{fig:chfist1} } \hspace{3cm} \subfigure [{\scriptsize
Simulated Surface due to perturbation in hidden variable
(\textit{c.f. Case~(iii(b))})}] {
\includegraphics[width=4cm]{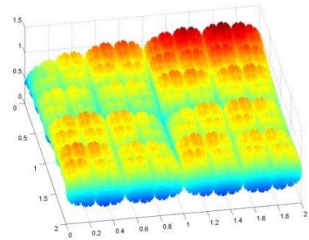}
\label{fig:chfist2}} \caption{Simulated Surfaces due to perturbation
in hidden variable (\textit{c.f. Case~(iii)})}
\end{figure}

\newpage

\begin{figure}[!hbp]\hspace{2.5cm}
\subfigure [{\scriptsize Simulated Surface due to perturbation in
all the variables (\textit{c.f. Case~(i(a)),(ii(a))(iii(a))})}] {
\includegraphics[width=4cm]{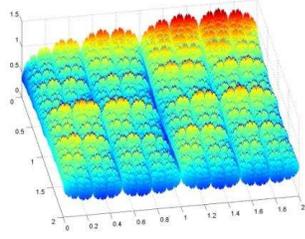}
\label{fig:chfistr1} } \hspace{3cm} \subfigure [{\scriptsize
Simulated Surface due to perturbation in all the variables
(\textit{c.f. Case~(i(b)),(ii(b))(iii(b))})}] {
\includegraphics[width=4cm]{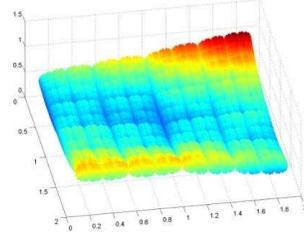}
\label{fig:chfistr2}} \caption{Simulated Surfaces due to
perturbation in all the variables}
\end{figure}

\section{Conclusion}
The present paper explores the stability of CHFIS when there is a
perturbation in independent variables, the dependent variable and
the hidden variable. The stability during the perturbations in all
the variables simultaneously is observed to be the combined
individual effect of perturbations in each variable on the stability
of CHFIS. The bound on error in the approximation of the data
generating function by CHFIS is described individually for each case
of perturbation in independent, dependent and hidden variables.
These bounds together are employed to find the total error bound on
CHFIS when there is perturbation in all the  variables
simultaneously. The stability results found here are illustrated
through a  sample surface. Our results are likely to find
applications in investigations concerning texture of surfaces of
naturally occurring objects like surfaces of rocks~\cite{xie97}, sea
surfaces~\cite{osborne86}, clouds~\cite{zimmermann92} etc.

{\textbf{ Acknowledgement:}}
The author Srijanani thanks CSIR for research grant (No:9/92(417)/2005-EMR-I) for the present work.

\end{document}